\documentclass[xcolor,usenames,dvipsnames,10pt]{article}
\usepackage{graphicx,pstricks,latexsym}
\usepackage[utf8]{inputenc}

\usepackage{makeidx}
\usepackage{amsmath,amsfonts,amssymb,mathtools,multicol}
\usepackage{mathrsfs,bbm,times}
\usepackage{esint}
\usepackage{enumitem}
\catcode`\@=11
\@addtoreset{equation}{section}
\catcode`\@=12
\usepackage{amsthm}
\usepackage{enumitem}

\parskip=4pt
\parindent 0pt


\catcode`\@=11
\@addtoreset{equation}{section}
\catcode`\@=12
\usepackage{amsthm}
\usepackage{enumitem}

\parskip=4pt
\parindent 0pt
\numberwithin{equation}{section}
\theoremstyle{plain}
\newtheorem{theorem}{Theorem}[section]

\newtheorem{corollary}[theorem]{Corollary}

\newtheorem{definition}[theorem]{Definition}

\newtheorem{lemma}[theorem]{Lemma}

\newtheorem{prop}[theorem]{Proposition}

\theoremstyle{definition}
\newtheorem{remark}[theorem]{Remark}

\newtheorem*{remark*}{Remark}
\newtheorem*{defn*}{Definition}
\newtheorem{example}[theorem]{Example}
\newtheorem*{example*}{Example}
\newtheorem*{theorem*}{Theorem}
\newtheorem*{corollary*}{Corollary}
\newtheorem*{result*}{Result}
\newtheorem*{lemma*}{Lemma}
\newtheorem*{def*}{Definition}
\newtheorem*{definitionh*}{Definition $\boldsymbol{(\sH)}$}
\newtheorem*{whitney*}{H. Whitney}
\newtheorem*{knaster*}{B. Knaster}
\newtheorem*{prop*}{Proposition}

\numberwithin{equation}{section}

\newtheorem*{A*}{Example A}
\newtheorem*{B*}{Example B}
\newtheorem*{C*}{Example C}

\newcommand{\Om}{\Omega}
\newcommand{\om}{\omega}
\newcommand{\ol}{\overline}
\renewcommand{\forall}{\text{ for all }}
\def\NN{\mathbb N}

\newcommand{\x}{\times}
\newcommand{\wk}{\rightharpoonup}
\newcommand{\pd}{\partial}
\renewcommand{\leq}{\leqslant}
\renewcommand{\geq}{\geqslant}
\def\RR{\mathbb R}

\newcommand{\e}{\epsilon}

\newcommand{\vt}{\vartheta}

\renewcommand{\a}{\alpha}

\newcommand{\sT}{\mathcal{T}}
\newcommand{\half}{{\frac{1}{2}}}

\renewcommand{\L}{\Lambda}
\renewcommand{\l}{\lambda}
\newcommand{\sH}{\mathcal{H}}
\newcommand{\sS}{\mathcal{S}}

\newcommand{\vp}{\varphi}

\newcommand{\sG}{\mathcal{G}}

\renewcommand{\b}{\beta}

\newcommand{\ZZ}{\mathbb{Z}}
\newcommand{\sA}{\mathcal{A}}

\newcommand{\sK}{\mathcal K}
\newcommand{\sC}{\mathcal{C}}
\newcommand{\ra}{\rightarrow}

\newcommand{\uhalf}{{\scriptstyle{\frac12}}}

\newcommand{\uf}{{\scriptstyle{\frac14}}}

\newcommand{\sL}{\mathcal L}

\renewcommand{\iff}{\text{ if and only if }}

\newcommand{\ed}{\end{document}}

\parskip=6pt
\parindent=0pt

\title{Path-Connectedness in Global Bifurcation Theory}

\author{J. F. Toland}
\date{}
\begin{document}

\maketitle
\begin{abstract}\noindent A celebrated result in bifurcation theory  is that global connected sets of non-trivial solutions
bifurcate from trivial solutions at non-zero eigenvalues of odd algebraic multiplicity of the linearized problem when the  operators involved are compact. In this paper a simple example is constructed  which satisfies the regularity hypotheses of the global bifurcation theorem, and the eigenvalue  has  algebraic multiplicity one, yet all the path-connected components of the  connected sets
that bifurcate are singletons.  Another example shows that even when the operators are everywhere infinitely differentiable and classical  bifurcation occurs locally at a simple eigenvalue, the global continuum may not be path-connected away from the bifurcation point. A third example shows that the  non-trivial solutions which, by variational theory,  bifurcate from  eigenvalues of any multiplicity when the problem has gradient structure, may not be connected and may contain no paths except singletons.
\end{abstract}
{MSC2020: 47J15, 58E07, 35B32, 54F15}\\
{Key Words: Global Bifurcation; Path-connectedness;\\ Hereditarily Indecomposable Continua;\\
Topological Degree and  Variational methods.}
\section{Introduction}
 Krasnosel'skii \cite{kras} considered non-linear eigenvalues in the form
\begin{subequations}\label{overall}\begin{equation} \label{eqn}\l x = Lx + R(\l,x), \quad \l \in \RR,~~ x \in X,
\end{equation}
where $X$ is a real Banach space,
the linear operator
$L:X \to X$ is compact,  and the non-linear  $R:\RR \times X \to X$ is continuous, compact, and  satisfies
\begin{equation}\label{hot}\frac{\|R(\l, x)\|}{\|x\|} \to 0 \text{ as } 0 \neq \|x\| \to 0 \text{ uniformly for $\l$ in bounded sets.}
\end{equation}
\end{subequations}
Since $R$ is continuous, \eqref{hot} implies that $R(\l,0)=0$, and hence $x=0$ is a solution of \eqref{eqn}, for all $\l \in \RR$. Let $\sT = \{(\l,0):\l \in \RR\}$ denote the set of trivial solutions of \eqref{eqn} and   $\sS$ the set of solutions that are not trivial. In all that follows, $L$ is linear and compact and $R$ is nonlinear, continuous and compact.
The first observation  is that under these hypotheses $\sS$ may be empty.
 \begin{example}\label{kexx}
Let $X = \RR^2$,
$L(x,y) = (x+y,y)$ and  $R(x,y) = (0,-x^3)$. Then $L$ is linear, \eqref{hot} holds, and
equation \eqref{eqn} is satisfied if and only if
$$(\l-1)x=y \text{ and }(\l-1)y = -x^3,$$
which implies that
$x\big( (\l-1)^2 + x^2\big) = 0$. Hence $x=0$ and, by the first equation, $y=0$, which shows  $\sS = \emptyset$.\qed
\end{example}

According to Krasnosel'skii \cite[Ch. IV, p. 181]{kras}, a point $\l_0 \in \RR$ is a bifurcation point for \eqref{eqn} if there exists a sequence $\{(\l_n, x_n)\}\subset \sS$ such that
$$ \l_n \to \l_0 \text{~ in ~ }\RR\quad \text{ and }\quad 0\neq \|x_n\| \to 0 \text { as } n \to \infty.$$
In this definition there is no mention of curves, or even  connected sets in $\sS$, bifurcating from $\sT$ at $(\l_0,0)$.

\emph{ By a path of  solutions in $\sS$ is meant $\{\gamma(t): t \in [0,1]\}$  where $\gamma:[0,1]\to \sS \subset \RR\times X$ is continuous. A path is non-trivial if $\gamma$ is not constant, and a curve is a smooth path.\qed}

The following  necessary criterion for $\l_0$ to be a bifurcation points
when $L$ is compact and $R$ satisfies \eqref{hot} in a Banach space $X$  is well known \cite[Ch.\,IV,\,\S 2, Lem. 2.1]{kras}.

\emph{A real number $\l_0\neq 0$ is a bifurcation point
only if it is an eigenvalue of $L$.   If $X$ is finite-dimensional and $\l_0 = 0$ is a bifurcation point, then 0 is an eigenvalue of $L$.}\qed

Note from Example \ref{kexx} that a real eigenvalue of $L$ need  not  be a be bifurcation point.

\subsection{Bifurcation Theory - Background}

\paragraph{Multiplicities.} \emph{The   geometric multiplicity of an eigenvalue  $\l_0$ of $L$ is the  dimension of the eigenspace  $\ker(\l_0I-L)$, and  its algebraic multiplicity is the dimension of $\cup_{k\in \NN}\ker (\l_0I-L)^k$.
When the algebraic multiplicity is one, $\l_0$ is called simple.
The  multiplicities of  all non-zero eigenvalues of  compact operators are finite.\qed}

\subsubsection*{From Classical Analysis}
Many seemingly different bifurcation phenomena were studied in \emph{ad hoc} situations before being recognised  by  Crandall \& Rabinowitz \cite{cr} as  special cases of the following overarching result, which is a corollary of  the Implicit Function Theorem.
\begin{theorem}\label{simple}
{\bf Bifurcation from a simple eigenvalue} \textnormal{\cite{cr}} Suppose  that  $\l_0$ is a simple eigenvalue of $L$,  that
$R:\RR \times X \to X$ is continuously differentiable, and that $\pd_{x\l}R$ exists and is continuous in a neighbourhood of $(\l_0,0)$.
Then there is an injective, continuous function $\gamma:(-1,1) \to \RR \times X$ such that $\gamma (0) = (\l_0,0)$ and a neighbourhood $U$ of $(\l_0,0)$ such that $U \cap \sS = \{\gamma(s):s\in (-1,0)\cup (0,1)\}$.
If $\pd_{xx}R$ is also continuous in the neighbourhood of $(\l_0,0)$, then $\gamma$ is $C^1$. \end{theorem}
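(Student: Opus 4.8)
The plan is to realise the statement as the Lyapunov--Schmidt reduction behind \cite{cr}, applied to $F(\l,x):=\l x-Lx-R(\l,x)$, which is $C^1$ on $\RR\x X$, has $F(\l,0)=0$, and satisfies $\pd_xF(\l_0,0)=\l_0I-L$. First I would extract the linear structure forced by simplicity. We may assume $\l_0\neq 0$ (so $\l_0I-L$ is then Fredholm of index zero), and algebraic multiplicity one forces $\ker(\l_0I-L)=\ker(\l_0I-L)^2$, whence the Riesz decomposition $X=N\oplus Z$ with $N:=\ker(\l_0I-L)=\RR x_0$ one-dimensional, $Z:=\mathrm{Range}(\l_0I-L)$ closed, $(\l_0I-L)|_Z:Z\ra Z$ an isomorphism, and $LZ\subseteq Z$ (as $L$ commutes with $\l_0I-L$). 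Let $\ell\in X^*$ satisfy $\ell(x_0)=1$, $\ell|_Z=0$, so $P:=\ell(\cdot)\,x_0$ is the projection onto $N$ along $Z$, and $I-P$ the projection onto $Z$.

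Next I would carry out the reduction with the ansatz $x=sx_0+y$, $s\in\RR$, $y\in Z$, splitting $F(\l,sx_0+y)=0$ into the auxiliary equation $(I-P)F(\l,sx_0+y)=0$ in $Z$ and the scalar bifurcation equation $\ell\big(F(\l,sx_0+y)\big)=0$. At $(\l_0,0,0)$ the $y$-derivative of the auxiliary equation is $(I-P)(\l_0I-L)|_Z$, an isomorphism of $Z$ — here one uses $\pd_xR(\l,0)=0$, which \eqref{hot} forces — so the $C^1$ implicit function theorem, needing only that $F$ is $C^1$, produces a $C^1$ map $y=\eta(\l,s)$ near $(\l_0,0)$ with $\eta(\l,0)=0$. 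Substituting, put $\varphi(\l,s):=\ell\big(F(\l,sx_0+\eta(\l,s))\big)$; then $\varphi(\l,0)=0$, so $\varphi(\l,s)=s\,\psi(\l,s)$ with $\psi$ continuous, and $Lx_0=\l_0x_0$, $LZ\subseteq Z$, $\pd_xR(\l,0)=0$ give $\psi(\l,0)=\l-\l_0$.

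The point that lets the theorem run without an explicit transversality hypothesis is that here transversality is automatic: continuity of $R$ and \eqref{hot} force $\pd_xR(\l,0)=0$ for every $\l$, hence $\pd_{x\l}R(\l_0,0)=0$, i.e. $\pd_{x\l}F(\l_0,0)x_0=x_0\notin Z$; equivalently $\pd_\l\psi(\l_0,0)=1$. To promote this to a continuous branch I would solve $\psi(\l,s)=0$ near $(\l_0,0)$. Rewriting the auxiliary equation as $\eta=\big[(\l I-L)|_Z\big]^{-1}(I-P)R(\l,sx_0+\eta)$ and using \eqref{hot} gives $\|\eta(\l,s)\|=o(|s|)$, which already shows $\psi(\l,s)=(\l-\l_0)+o(1)$ as $s\ra 0$, uniformly for $\l$ near $\l_0$; then, writing $\tfrac1s\,\pd_\l R(\l,sx_0+\eta)=\int_0^1\pd_{x\l}R\big(\l,ts(x_0+\eta/s)\big)(x_0+\eta/s)\,dt$ and invoking continuity of $\pd_{x\l}R$, one checks that $\pd_\l\psi$ exists, is continuous, and equals $1+o(1)>\tfrac12$ near $(\l_0,0)$. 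Hence $\psi(\cdot,s)$ is strictly increasing in $\l$ for each small $s$, so $\psi(\l,s)=0$ has a unique local root $\l=\Lambda(s)$, and joint continuity of $\psi$ plus this monotonicity make $\Lambda$ continuous with $\Lambda(0)=\l_0$. Then $\gamma(s):=\big(\Lambda(s),\,sx_0+\eta(\Lambda(s),s)\big)$, reparametrised so $s\in(-1,1)$, is the required map: $\gamma(0)=(\l_0,0)$; $\gamma$ is injective because its $X$-component is $\approx sx_0$ (so $\neq 0$ for $0\neq s$ small) while $\ell$ of it equals $s$; and any $(\l,x)\in U\cap\sS$ for a sufficiently small neighbourhood $U$ of $(\l_0,0)$ has, with $s:=\ell(x)$ and $y:=x-sx_0$, $y=\eta(\l,s)$, $\psi(\l,s)=0$, $\l=\Lambda(s)$ and $s\neq 0$ by the uniqueness in the two implicit-function steps, i.e. $(\l,x)=\gamma(s)$. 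For the last sentence: if $\pd_{xx}R$ is also continuous then $\pd_xR(\l,z)=\int_0^1\pd_{xx}R(\l,tz)z\,dt=O(\|z\|)$, which makes $\eta$ depend in a $C^1$ way on $s$ as well and $\psi$ genuinely $C^1$, and the ordinary implicit function theorem then gives $\Lambda\in C^1$, hence $\gamma\in C^1$.

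The step I expect to be the main obstacle is this regularity bookkeeping: extracting an honest continuous — indeed injective — \emph{branch}, rather than just a solution set, from a reduced scalar equation which a priori is only continuous. The delicate fact is that the hypothesis on $\pd_{x\l}R$ is exactly what renders $\pd_\l\psi$ continuous near $(\l_0,0)$ — the $\pd_{xx}R$-type terms that would otherwise obstruct continuity are multiplied by quantities of size $\|\eta\|=o(|s|)$ and disappear as $s\ra 0$ — while the absence of an explicit transversality assumption is compensated, once and for all, by the normalisation $\pd_xR(\l,0)\equiv 0$ that \eqref{hot} builds in.
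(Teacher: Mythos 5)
The paper does not prove this statement --- it is quoted from Crandall \& Rabinowitz \cite{cr} as background --- so there is no internal proof to compare against; your Lyapunov--Schmidt argument is a correct reconstruction of the standard proof from that reference, including the key observation that \eqref{hot} forces $\pd_x R(\l,0)\equiv 0$ and hence makes the Crandall--Rabinowitz transversality condition $\pd_{x\l}F(\l_0,0)x_0\notin \mathrm{Range}(\l_0 I-L)$ automatic, and the correct bookkeeping ($\|\eta\|=o(|s|)$, $\pd_\l\psi=1+o(1)$) needed to solve the reduced scalar equation with only continuity of $\pd_{x\l}R$ in hand. The one caveat is your opening ``we may assume $\l_0\neq 0$'': when $X$ is infinite-dimensional and $\l_0=0$, the operator $-L$ is not Fredholm and the Riesz splitting $X=\ker(\l_0I-L)\oplus\mathrm{Range}(\l_0I-L)$ can fail, so your argument (like the theorem as it is actually used) covers $\l_0\neq 0$ and the finite-dimensional case only.
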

\begin{remark*} The nonlinearity $R$ in Example \ref{kexx} satisfies the hypotheses in Theorem \ref{simple}, and  the only eigenvalue of $L$ is $\l_0=1$ which has geometric multiplicity 1, but it is not a  bifurcation point. However
 Theorem \ref{simple} does not apply because  the algebraic   multiplicity of $\l_0$ is 2. \qed\end{remark*}
\emph{From now on, the word multiplicity will always refer to algebraic multiplicity.}

\subsubsection*{From Topological Degree Theory}
In 1950, Krasnosel'skii \cite[Thm.\,2]{krasn}, \cite[\S IV.2,\,p.\,196]{kras}, used  Leray-Schauder topological degree theory to prove, under the hypotheses of \eqref{overall},  that every non-zero eigenvalue of $L$ with odd multiplicity is a bifurcation point \cite[Ch.\,IV, Thm.\,2.1] {kras}. Then, under the same hypotheses, in 1971 Rabinowitz improved the method to obtain the ground-breaking result that  a connected set of non-trivial solutions bifurcates globally in $\RR \times X$ from $\sT$ at   eigenvalues of odd   multiplicity.
\begin{theorem} \label{godd}{\bf  Global bifurcation at odd   multiplicity eigenvalues}\textnormal{ \cite[Thm.\,1.3]{phr}}. Suppose $L$ and $R$ are as in \eqref{overall}  and $\l_0$ is a non-zero  eigenvalue of $L$ of odd  multiplicity. Then $\l_0$ is a bifurcation point
and there exists a connected subset $\mathcal C$ of $\sS$ such that $(\l_0,0)\in \ol{\sC}$ and
either $\sC$ is unbounded in $\RR \times X$ or there exists  $(\l_1,0) \in \ol{\sC}$ where $\l_1 \neq \l_0$ is also an eigenvalue of odd   multiplicity of $L$.
 (If $X$ is finite-dimensional, the result  holds when $\l_0=0$ is an eigenvalue of   odd    multiplicity.)\end{theorem}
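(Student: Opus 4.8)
The plan is to combine Leray--Schauder degree theory with a topological separation lemma, as in Rabinowitz's original argument. Fix $\lambda\neq 0$ and rewrite \eqref{eqn} as the fixed-point equation $x=\sF(\lambda,x)$ with $\sF(\lambda,x):=\lambda^{-1}\bigl(Lx+R(\lambda,x)\bigr)$; compactness of $L$ and $R$ makes $\sF(\lambda,\cdot):X\to X$ compact, $\sF$ is continuous in $(\lambda,x)$ off $\lambda=0$, and $\sF(\lambda,0)=0$. Two preliminary facts drive the proof. First, because $L$ and $R$ are compact, every bounded closed subset of $\sS$ is compact --- a bounded sequence $(\lambda_n,x_n)\in\sS$ has $x_n=\sF(\lambda_n,x_n)$ in a precompact set --- so $\ol{\sS}$ is locally compact. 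Second, for $\lambda$ not an eigenvalue of $L$ the trivial solution is an isolated zero of $I-\sF(\lambda,\cdot)$ and, by the Leray--Schauder index formula together with \eqref{hot}, its index equals $(-1)^{m(\lambda)}$, where $m(\lambda)$ is the sum of the algebraic multiplicities of the eigenvalues $\mu$ of $L$ with $\mu/\lambda>1$. Since only $\mu=\lambda_0$ has $\mu/\lambda$ crossing $1$ as $\lambda$ moves through $\lambda_0$, and $\lambda_0$ has odd multiplicity, this index reverses sign across $\lambda_0$. The claim that $\lambda_0$ is a bifurcation point is the local shadow of what follows: were it false, a small cylinder about $(\lambda_0,0)$ would carry no nontrivial solutions on its boundary, homotopy invariance of the degree in $\lambda$ would force the indices of $0$ on the two sides of $\lambda_0$ to coincide, and that contradicts the sign reversal; in particular $(\lambda_0,0)\in\ol{\sS}$.

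For the global alternative I would argue by contradiction. Let $\sC$ be the connected component of $(\lambda_0,0)$ in $\ol{\sS}$, and suppose $\sC$ is bounded and contains no $(\lambda_1,0)$ with $\lambda_1\neq\lambda_0$ an eigenvalue of odd multiplicity. Then $\sC$ is compact; each of its points on $\sT$ is a limit of nontrivial solutions, hence a bifurcation point, hence an eigenvalue of $L$, and since eigenvalues are isolated $\sC\cap\sT$ is a finite set $\{(\mu_0,0),\dots,(\mu_k,0)\}$ with $\mu_0=\lambda_0$ and, by hypothesis, $\mu_1,\dots,\mu_k$ of even multiplicity. Choose $M$ with $\sC\subset B_M$; then $\sC$ is also a component of the compact metric space $\ol{\sS}\cap\ol{B_M}$, and it is disjoint from the closed set $\bigl(\ol{\sS}\cap\pd B_M\bigr)\cup\bigl(\ol{\sS}\cap\bigl(\sT\setminus\bigcup_j I_j\bigr)\bigr)$, where $I_j$ is a short open arc of $\sT$ about $(\mu_j,0)$ containing no other eigenvalue. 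By a separation lemma of Whyburn type (in a compact metric space a component is the intersection of its relatively clopen neighbourhoods) there is a relatively clopen set containing $\sC$ and missing that closed set, and a small thickening of it yields a bounded open $\sO\subset\RR\times X$ with $\sC\subset\sO$, $\pd\sO\cap\ol{\sS}=\emptyset$, and $\sO\cap\sT$ a finite disjoint union of open intervals $(a_j,b_j)\times\{0\}$, one about each $\mu_j$, whose endpoints $a_j,b_j$ are neither eigenvalues nor bifurcation points.

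Now set $\sO_\lambda:=\{x:(\lambda,x)\in\sO\}$ and $d(\lambda):=\deg\bigl(I-\sF(\lambda,\cdot),\sO_\lambda,0\bigr)$. Since $\pd\sO$ carries no solution, $d(\lambda)$ is defined for every $\lambda\neq0$ with $(\lambda,0)\notin\pd\sO$, is locally constant there, and vanishes for $|\lambda|$ large because $\sO$ is bounded. The only candidates for a jump are the endpoints $a_j,b_j$. At such an endpoint $c$, which is not an eigenvalue, additivity of the degree gives the jump: on the side where $(\lambda,0)\in\sO$ one has $d(\lambda)=(-1)^{m(\lambda)}$ plus the sum of the indices of the nontrivial solutions in $\sO_\lambda$ (these are isolated from $0$ since $c$ is not an eigenvalue), on the other side the first term drops out, and since no bifurcation occurs at $c$ the nontrivial part varies continuously across $c$; hence $d$ jumps at $c$ by $\pm(-1)^{m(c)}$, the sign recording whether the trivial solution enters or leaves $\sO$ there. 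Traversing the interval about $\mu_j$ from left to right therefore changes $d$ by $(-1)^{m(a_j)}-(-1)^{m(b_j)}=(-1)^{m(a_j)}\bigl(1-(-1)^{\textnormal{mult}(\mu_j)}\bigr)$, which is $0$ for the even $\mu_j$ and $\pm2$ for $\mu_0=\lambda_0$. Summing over $j$, the net change of $d$ from $-\infty$ to $+\infty$ is $\pm2\neq0$, contradicting $d(-\infty)=d(+\infty)=0$. Hence $\sC$ is unbounded or it meets $\sT$ at another eigenvalue of odd multiplicity, which is the assertion.

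The step I expect to fight hardest is the construction of $\sO$: one must pass from the possibly pathological component $\sC$ to a genuine bounded open set with solution-free boundary meeting $\sT$ only in the prescribed short intervals, and this is precisely where local compactness of $\ol{\sS}$ (a consequence of the compactness of $L$ and $R$) and the Whyburn separation lemma are indispensable. A second point needing separate attention is the value $\lambda=0$, where the reduction $x=\sF(\lambda,x)$ degenerates and $d$ can jump for reasons unrelated to eigenvalues; the bookkeeping above is clean only when $\sC$ stays off $\lambda=0$, so in infinite dimensions this possibility must be excluded or treated by a supplementary argument, and this is exactly what is responsible for the parenthetical qualification (admitting $\lambda_0=0$) in the finite-dimensional case.
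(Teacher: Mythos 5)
The paper does not prove this theorem --- it is quoted verbatim from Rabinowitz \cite[Thm.\,1.3]{phr} --- and your argument is a faithful outline of that original proof: the Leray--Schauder index of the trivial solution jumps sign across an odd-multiplicity eigenvalue, a Whyburn-type separation lemma (using local compactness of $\ol{\sS}$) produces the bounded open set $\sO$ with solution-free boundary, and the contradiction comes from the total degree vanishing for large $|\l|$ while the summed jumps equal $\pm 2$. The one caveat, which you already flag yourself, is that for the formulation $\l x = Lx + R(\l,x)$ both the fixed-point reduction and your claim that bounded closed subsets of $\sS$ are compact degenerate as $\l \to 0$, so the argument needs $\sC$ kept away from $\l = 0$ (or a supplementary treatment there); this is precisely why the hypothesis $\l_0 \neq 0$ and the finite-dimensional parenthetical appear in the statement.
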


\textbf{Related Results.}  Krasnosel'skii \cite[Ch.\,IV.5,\,p.232\,ff.]{kras} also studied bifurcation at eigenvalues of even  multiplicity, when the nonlinearity $R$ is non-degenerate in a certain sense. Under his hypotheses, the methods introduced by Rabinowitz \cite{phr} lead easily to global bifurcation at eigenvalues of even multiplicity for certain nonlinearities  \cite{jft2}. The   Examples in Section \ref{caut}  are relevant in that context also. \qed

\subsubsection*{ From Variational Methods}
To justify linearisation without reference to multiplicity of eigenvalues, Krasnoselskii \cite[\S VI]{kras} developed a variational approach to bifurcation theory in Hilbert space.
(For an up-to-date  account in Banach spaces, see \cite{phrab}.)
Let $X$ be a real Hilbert space with inner product $\langle\cdot, \cdot\rangle$ and let $h:X \to \RR$ be differentiable with derivative  $Dh[x]:X \to \RR$ at $x \in X$. Then $Dh[x]$ is a  bounded linear operator on $X$ and, by the Riesz Representation Theorem, there exists a unique $x^* \in X$ such $Dh[x]y = \langle x^*,  y\rangle$ for all $y \in X$. Hence
$\nabla h(x) = x^*$  defines an operator $\nabla h:X \to X$,  called the gradient of $h$, and an operator $H:X \to X$ is said to have gradient structure  if $H = \nabla h$ for some differentiable $h:X \to \RR$.

It is easily  seen  that a bounded linear operator $L:X \to X$ has gradient structure if and only if $\langle Lx,y\rangle = \langle x,Ly\rangle$ for all $x,y \in X$. In other words $L$ is a gradient if and only if it is self adjoint, in which case $Lx = \nabla \ell (x)$ where $\ell(x) = \uhalf \langle Lx,x\rangle,\,x\in X$.  Note that  when $L$ is self-adjoint,   $(L-\l I)^2 x = 0$ implies
$$
\|(L-\l I) x\|^2 =\langle (L-\l I)x,(L-\l I)x\rangle = \langle (L-\l I)^2x,x\rangle = 0,
$$
and hence algebraic multiplicity and geometric multiplicity of eigenvalues coincide for self-adjoint operators. Obviously the identity operator $I$ on $X$ has gradient structure  $ I= \nabla \iota$ where $\iota(x) = \uhalf \|x\|^2$.
Finally, a function $g:X \to \RR$  is weakly continuous  if  $g(x_k) \to g(x_0)$ in $\RR$ for every weakly convergent sequence $x_k\wk x_0$ in $X$. Vainberg proved \cite[Thm. 8.2]{vainberg} that $g$ is weakly continuous when its
gradient is a compact operator.
In this setting, a special case of \eqref{eqn}  is
\begin{subequations}\label{gradprob}\begin{equation} \label{gradeqn}\l x = Lx + R(x), \quad \l \in \RR,~ x \in X,
\end{equation}
where $X$ is a   real Hilbert space, $L:X \to X$ is self-adjoint, and $R$ satisfies \eqref{hot} with  gradient structure     independent of $\l$:
\begin{equation}\label{ghot}
R(x) = \nabla r (x), \text{ where } r \text{ is weakly continuous}.
\end{equation}
\end{subequations}
Krasnosel'skii proved   \cite[Ch. VI, \S 6, Thm.\,2.2, p.\,332]{kras} that
for this class of problems bifurcation occurs at all non-zero eigenvalues of $L$, independent of multiplicity.
The following version of his theorem replaces his hypothesis that ``$R$ is uniform differentiable'' near 0  with Vainberg's  characterisation \cite[Thm.\,4.2,\,p.\,45]{vainberg} of uniform differentiability  in terms of
the bounded uniform continuity of its Fr\'echet derivative.

\begin{theorem}{\bf With gradient structure, non-zero eigenvalues are bifurcation points}
\label{gradbifthy}If  $R$  in \eqref{gradprob}   has Fr\'echet derivative $x \mapsto dR[x]$  bounded and uniformly continuous in a neighbourhood of 0 in $X$, all eigenvalues $\l_0\neq 0$ of $L$ are bifurcation points.
(When $X$ is finite-dimensional, the condition $\l_0 \neq 0$ is not needed.) \end{theorem}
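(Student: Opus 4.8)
The plan is to recognise \eqref{gradeqn} as the Euler--Lagrange equation of a functional and then argue by contradiction via a Morse-theoretic obstruction that survives for every multiplicity. Put $E(x)=\tfrac12\langle Lx,x\rangle+r(x)$, so that $\nabla E=L+R$, and for $\l\in\RR$ set $J_\l(x)=\tfrac12\langle(\l I-L)x,x\rangle-r(x)$; then $(\l,x)$ solves \eqref{gradeqn} precisely when $x$ is a critical point of $J_\l$. Since $x\mapsto dR[x]$ is bounded and uniformly continuous near $0$, $R$ is of class $C^1$ there and hence $J_\l\in C^2$ near $0$ with $\nabla^2 J_\l(0)=\l I-L$; moreover \eqref{hot} forces $R(0)=0$ and $dR[0]=0$, whence $r(x)=o(\|x\|^2)$ (I take $r(0)=0$). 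Replacing $(L,R,\l)$ by $(-L,-R,-\l)$ if necessary, I may assume $\l_0>0$; the case $\l_0=0$ arises only when $\dim X<\infty$, and the argument below goes through there unchanged.

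Assume, for contradiction, that $\l_0$ is not a bifurcation point. Then there are $\rho_0>0$ and a $\d>0$, taken small enough that $\l_0$ is the only point of $\sigma(L)$ in $[\l_0-\d,\l_0+\d]$ and $\l>0$ throughout this interval, such that for every $\l$ with $|\l-\l_0|\le\d$ the origin is the only solution of \eqref{gradeqn} with $\|x\|\le\rho_0$; equivalently $0$ is the only critical point of $J_\l$ in $\overline{B}_{\rho_0}$. Because $\l I-L$ is boundedly invertible for such $\l$, the origin is a \emph{nondegenerate} critical point of $J_\l$, with Morse index $\iota(\l)=\#\{\text{eigenvalues of }L\text{ that exceed }\l\}$; this number is finite, as $\l>0$ and the eigenvalues of $L$ accumulate only at $0$. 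For $\l\in(\l_0,\l_0+\d]$ one gets $\iota(\l)=\nu:=\#\{\mu\in\sigma(L):\mu>\l_0\}$, whereas for $\l\in[\l_0-\d,\l_0)$ one gets $\iota(\l)=\nu+m$, where $m=\dim\ker(\l_0 I-L)\ge1$ is the multiplicity of $\l_0$ (algebraic and geometric coincide here, $L$ being self-adjoint). By the Morse--Palais lemma, $C_q(J_\l,0)\cong\ZZ$ for $q=\iota(\l)$ and $C_q(J_\l,0)=0$ otherwise, so the critical groups of $J_\l$ at $0$ for $\l=\l_0-\d$ and for $\l=\l_0+\d$ are \emph{not} isomorphic.

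On the other hand, $\{J_\l:|\l-\l_0|\le\d\}$ is a $C^2$-continuous family (only the quadratic part varies, linearly in $\l$), each $J_\l$ has $0$ as its unique critical point in the fixed ball $\overline{B}_{\rho_0}$, and each satisfies the Palais--Smale condition there, since any Palais--Smale sequence in $\overline{B}_{\rho_0}$ has a convergent subsequence ($R$ being compact and $\l I-L$ boundedly invertible). The invariance of critical groups under such a continuation then forces $C_q(J_{\l_0-\d},0)\cong C_q(J_{\l_0+\d},0)$ for all $q$, contradicting the previous paragraph since $m\ge1$. Hence $\l_0$ is a bifurcation point. (A route avoiding infinite-dimensional Morse theory is to perform first a Lyapunov--Schmidt reduction onto the finite-dimensional eigenspace $V=\ker(\l_0 I-L)$ --- the complementary equation on $V^\perp$ being solved by a contraction, which works precisely because $\|dR[\cdot]\|$ is small near $0$ --- and then to compare the critical groups at $0$ of the reduced $C^1$ functional on a small ball in $\RR^m$, where $0$ is a strict local maximum for $\l<\l_0$ and a strict local minimum for $\l>\l_0$.)

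The step carrying the whole argument, and the only place where the gradient hypothesis \eqref{ghot} is genuinely used, is the replacement of Leray--Schauder degree by critical groups. Comparing $\deg(\nabla J_\l,B_{\rho_0},0)=(-1)^{\iota(\l)}$ on the two sides of $\l_0$ detects the jump only when $m$ is odd --- that is exactly Theorem~\ref{godd}, and Example~\ref{kexx} shows the conclusion really does fail for non-gradient $R$ with $m$ even --- whereas the \emph{graded} groups concentrated in degrees $\nu$ and $\nu+m$ are non-isomorphic for every $m\ge1$, even though their Euler characteristics $(-1)^\nu$ and $(-1)^{\nu+m}$ may agree. So the hard part is not a single estimate but assembling the Morse-theoretic package: that $0$ is nondegenerate (needs $dR[0]=0$, i.e., \eqref{hot}); that $\l\mapsto J_\l$ is $C^2$ in $x$ and continuous in $\l$ (needs $dR$ bounded and uniformly continuous near $0$, which is Vainberg's characterisation of uniform differentiability); and that Palais--Smale holds locally (needs the compactness of $L$ and $R$) --- so that the continuation-invariance of critical groups may be invoked.
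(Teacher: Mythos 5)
Your argument is sound in substance, but note first that the paper does not actually prove Theorem \ref{gradbifthy}: it is quoted from Krasnosel'skii \cite[Ch.\,VI, \S 6, Thm.\,2.2]{kras}, with his hypothesis of uniform differentiability rephrased via Vainberg's characterisation. So the relevant comparison is with that classical proof, and your route is genuinely different. Krasnosel'skii works on the small spheres $\|x\|=\rho$: the weak continuity of $r$ demanded in \eqref{ghot}, together with weak compactness of balls, yields constrained extrema (selected by a min--max over subspaces so as to target the eigenvalue $\l_0$), and one shows the Lagrange multipliers of these constrained critical points converge to $\l_0$ as $\rho\to 0$; no deformation theorem is needed. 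You instead assume no bifurcation and contradict the homotopy invariance of critical groups, using that the Morse index of the origin jumps by $m=\dim\ker(\l_0I-L)\geq 1$ across $\l_0$, so the critical groups at the nondegenerate parameters $\l_0\pm\d$ are concentrated in different degrees. This is the standard modern proof (Chang; Mawhin--Willem), and it works here: the stated hypothesis makes $R$ of class $C^1$ near $0$ with $dR[0]=0$ by \eqref{hot}, so $J_\l$ is $C^2$ near the origin and the Morse--Palais lemma applies at $\l_0\pm\d$; and your diagnosis of why the graded invariant detects every $m$ while the degree $(-1)^{\iota(\l)}$ detects only odd $m$ is exactly the right way to locate where gradient structure enters, in contrast with Theorem \ref{godd}. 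Two small repairs: the uniform Palais--Smale condition on $\overline{B}_{\rho_0}$ should not be justified by bounded invertibility of $\l I-L$, which fails at $\l=\l_0$; it follows directly from compactness of $L$ and $R$ together with $\l$ bounded away from $0$, since $\l x_n=\bigl(\l x_n-Lx_n-R(x_n)\bigr)+Lx_n+R(x_n)$ then converges along a subsequence. And the continuation invariance of critical groups is itself a nontrivial deformation theorem which you invoke rather than prove, so it should be cited precisely, since it carries the whole argument. The trade-off is clear: your proof dispenses with weak continuity of $r$ but pays with this machinery and with the $C^2$ smoothness near $0$, which the theorem's hypothesis happens to supply.
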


\begin{remark}While Rabinowitz's  theory of global bifurcation yields  \emph{connected} sets $\sC \subset \sS$ bifurcating from $\sT$ at eigenvalues of odd multiplicity,   B\"ohme's example \cite[\S 6]{bohme}
 showed that
no such connectedness is guaranteed by Theorem \ref{gradbifthy}.\qed
\end{remark}
\subsubsection*{From   Real-Analyticity}
So far in this summary,  Theorem \ref{simple} is the only result which guarantees the existence of a curve of non-trivial solutions of equation \eqref{overall}, and even then it is localized to a neighbourhood $(\l_0,0)$, where $\l_0$ is the bifurcation point.
However, in 1973 Dancer \cite{end1,end2} showed, among many other things, that when the operators in \eqref{eqn} are real-analytic (infinitely differentiable and equal to the sum of their Taylor series about every point), there \emph{bifurcates from a simple eigenvalue a global path  of solutions which, at every point, has a local real-analytic re-parametrization}.
More precisely, provided some standard
functional-analytic structure is in place,
 the global continuum $\sC$  which
bifurcates from the trivial solutions at a simple eigenvalue
contains a continuous curve $\sK$, parameterised by $s \in [0,\infty)$, with  the following properties.
\begin{itemize}
\item[(i)]  $\sK =\{(\L(s), \kappa (s)): s \in [0,
\infty)\}\subset \sC$  is either unbounded or forms a closed loop
in $\RR \x X$.
\item[(ii)]
For each $s^* \in (0,\infty)$ there exists $\rho^*:(-1,1) \ra \RR$
(a re-parametrisation) which is continuous, injective, and
\begin{align*} \rho ^*(0) = s^* ~\text{ and }~~
t \mapsto (\L(\rho^*(t)), \kappa (\rho^*(t))
\textnormal{ is analytic on} ~~ t \in (-1,1).
\end{align*} This does not imply that $\sK$ is locally a smooth
curve. (The map $\sigma:(-1,1) \ra \RR^2$ given by $\sigma (t) =
(t^2,t^3)$ is real-analytic and its image is a curve with a cusp.)
Nor does it preclude the possibility of secondary bifurcation
points on $\sK$. In particular, since $(\Lambda,\kappa):[0,\infty)
\ra \RR \x X$ is not required to be globally injective;
self-intersection of $\sK$ (as in a figure eight) is not ruled
out.
\item[(iii)]
Secondary bifurcation points and points where the  bifurcating branch is not smooth, if any,
are isolated. \end{itemize}

In plain language, by Dancer's theory  for real-analytic equations (see \cite{bt} for an introductory account) there
bifurcates from a simple eigenvalue a unique global path of solutions which  is a smooth 1-dimensional manifold except possibly at a discrete set of  points. This path is unique in the sense that it has a pre-determined  continuation
through secondary bifurcation points, or even when it encounters a higher-dimensional manifold of solutions. \qed

The following three example illustrate how  the situation may be radically different when the hypotheses of Theorems \ref{simple}, \ref{godd} and \ref{gradbifthy}   are satisfied
but the operators inbolved are infinitely differentiable, but not real-analytic.

The main conclusion, that
  all path-connected components of the set of non-trivial solutions may be singletons, and the resulting possibility that in principal  no two non-trivial solutions can be joined by a path, are obviously   important for applications.

\section{Three Examples}\label{caut}

In the first two examples of \eqref{overall}, $X = \RR$,  $L=0$, $\l_0 = 0$ is the only  eigenvalue of $L$ and is simple,
and $R= r:\RR^2 \to \RR$, where $r$  is at least continuously differentiable  and satisfies \eqref{hot}. Under these hypotheses
\eqref{eqn} has the form
\begin{equation}\label{EQN}\l x = r(\l,x),\qquad (\l,x) \in \RR^2.\end{equation}
The first example concerns  global connected sets of solutions of \eqref{EQN} which  by Theorem \ref{godd}  bifurcate at the
 simple eigenvalue $\l_0=0$ when Theorem \ref{simple} does not apply because $\pd_{\l x}r$ is not continuous at $(0,0)$.
\begin{A*} 
There exists a $C^1$-function $r$  on $\RR^2$ which is  infinitely differentiable on $\RR^2\setminus \{(0,0)\}$,  for which  the closure $\ol \sC$ of the global connected sets  of non-trivial solutions of \eqref{EQN}, which bifurcates at $\l_0= 0$ by  Theorem \ref{godd},  is unbounded and connected but contains  no path-connected sets except singletons.\qed
\end{A*}
The second example illustrates the possible behaviour of  solutions which bifurcate at  $\l_0=0$ when simultaneously  Theorems \ref{simple} yields   the local bifurcation  of a smooth curve, and Theorem  \ref{godd} yields the global bifurcation of unbounded connected sets of non-trivial solutions of \eqref{EQN}.
\begin{B*}
There exists an infinitely differentiable function $r:\RR^2 \to \RR$  for which
the closure $\ol \sC$ of the  connected sets  of non-trivial solutions of \eqref{EQN} that bifurcate at $\l_0=0$,   by Theorem \ref{simple} and Theorem \ref{godd},   is the union of three disjoint connected sets,
$$ \ol\sC=\sL \cup\sC^+ \cup \sC^-.
$$
Here  $\sL$  is the smooth curve $\{0\}\times (-\uhalf,\uhalf)$, and $\sC^\pm$ are  closed, unbounded, connected sets  in the first and third quadrants respectively with $(\pm \uhalf,0) \in \sC^\pm$ and all  path-connected components of $\sC^+\cup \sC^-$ are singletons. (The only non-trivial paths in $\ol \sC$ are subsets of the closure of $\sL$.)\qed
\end{B*}
Although B\"ohme  \cite{bohme} showed that the non-trivial solution set of \eqref{gradprob} given by Theorem \ref{gradbifthy} when $R$ has gradient structure need not be connected, it may have   connected components. The example below is constructed to illustrates  the possibility that in addition all its  path-connected components may be singletons.

\begin{C*} In this  example of problem \eqref{gradprob}, $X = \RR^2$,  $R = \nabla r$ where   $r:\RR^2 \to \RR$ is infinitely differentiable, and  $L$ is the zero operator which has only one eigenvalue, namely 0 with  multiplicity 2. Then  \eqref{gradeqn} has the form
\begin{equation}\label{EQN2}\l (x,y) = \nabla r(x,y),\quad (x,y) \in \RR^2,~\l \in \RR,\end{equation}
and the existence of non-trivial solutions with $\big(\l,( x,y)\big)$ near  $\big(0,(0,0)\big))$ is guaranteed by Theorem \ref{gradbifthy}.
 Example C shows that, in addition to not forming a connected set,  all the path-connected components of the non-trivial solution set may be singletons.\qed
\end{C*}

\subsection*{Key Ingredients}\label{keyi}
The construction of these examples relies on  classical results of   Whitney and Knaster.

\begin{theorem*}  {\textbf{(Whitney)}} \emph{ For any closed set $G \subset\RR^n$ there is an infinitely differentiable, globally Lipschitz continuous function $h$ such that $G= \{x \in \RR^n: h(x) = 0\}$, and  all the derivatives of $h$ are zero at every point of $G$.}
\end{theorem*}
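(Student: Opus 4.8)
This is the classical flat‑function construction of Whitney. The plan is to produce $h$ as a weighted sum of smooth bumps supported in the open complement $U:=\RR^n\setminus G$, with weights decaying so fast that (i) the series may be differentiated term by term arbitrarily often, (ii) every derivative of the sum tends to $0$ near $G$ faster than any power of the distance to $G$, and (iii) the gradient stays globally bounded. Positivity of the weights together with a covering property of the bumps will then force the zero set of the extended function to be exactly $G$.

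First I would fix a Whitney decomposition $\{Q_k\}_{k\in\NN}$ of $U$ into dyadic cubes, together with slightly dilated cubes $Q_k^\ast\subset U$, so that $\operatorname{dist}(x,G)$ is comparable to the fixed number $d_k:=\operatorname{dist}(Q_k,G)$ for every $x\in Q_k^\ast$, and $d_k\asymp\diam Q_k$. Choose $\psi_k\in C_c^\infty(Q_k^\ast)$ with $0\le\psi_k\le1$ and $\psi_k\equiv1$ on $Q_k$; since the $Q_k$ cover $U$ we have $\sum_k\psi_k\ge1$ on $U$. With $M_k:=1+\max_{|\alpha|\le k}\sup|\partial^\alpha\psi_k|$ a finite number, I would then set
\[
a_k:=\frac{2^{-k}}{M_k}\,\exp\!\Big(\!-\frac{1}{\min(1,d_k)}\Big),\qquad h:=\sum_{k\in\NN}a_k\psi_k .
\]
The factor $2^{-k}/M_k$ guarantees $a_kM_k\le2^{-k}$, so for each multi‑index $\alpha$ all but finitely many terms of $\sum_k a_k\partial^\alpha\psi_k$ are dominated by $2^{-k}$; hence each such series converges absolutely and locally uniformly, and, since the decomposition is locally finite, $h\in C^\infty(U)$ with $\partial^\alpha h=\sum_k a_k\partial^\alpha\psi_k$. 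The exponential factor gives $a_kM_k\le\exp(-1/d_k)$ whenever $d_k\le1$, and this is what produces flatness at $G$.

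Next I would verify the three required properties. \emph{Zero set:} $a_k>0$ and $\psi_k\ge0$ with $\sum_k\psi_k\ge1$ on $U$, so $h>0$ on $U$, while each $\psi_k$ vanishes on a neighbourhood of $G$; extending $h$ by $0$ on $G$ therefore yields a function on $\RR^n$ with $\{x:h(x)=0\}=G$ exactly. \emph{Global Lipschitz:} $|\nabla h|\le\sum_k a_kM_k\le\sum_k2^{-k}=1$ on $U$ and $\nabla h=0$ on $G$, so $h$ is $1$‑Lipschitz on $\RR^n$. \emph{Flatness:} any cube $Q_k^\ast$ containing a point $x$ with $\operatorname{dist}(x,G)$ small must itself have $d_k$ small, by the comparability, and then $d_k\le\operatorname{dist}(x,G)/c'$ for a fixed $c'>0$; hence for every $\alpha$,
\[
|\partial^\alpha h(x)|\le\sum_{k:\,x\in Q_k^\ast}a_kM_k\le\Big(\sum_k2^{-k}\Big)\exp\!\Big(\!-\frac{c'}{\operatorname{dist}(x,G)}\Big)=\exp\!\Big(\!-\frac{c'}{\operatorname{dist}(x,G)}\Big)
\]
once $\operatorname{dist}(x,G)$ is small. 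Since $\operatorname{dist}(x,G)\le|x-x_0|$ for $x_0\in G$, the right‑hand side is $o(|x-x_0|^N)$ for every $N$, and a routine induction on the order of the derivative upgrades $h$ to a $C^\infty$ function on $\RR^n$ with $\partial^\alpha h(x_0)=0$ for all $\alpha$ and all $x_0\in G$: continuity of $h$ is the $\alpha=0$ case of the bound, and if $\partial^\beta h$ is already continuous with value $0$ on $G$ for $|\beta|\le p-1$, its difference quotients at $x_0\in G$ are bounded by $\exp(-c'/|x-x_0|)/|x-x_0|\to0$, giving $\partial^\alpha h(x_0)=0$ for $|\alpha|=p$, while the displayed bound gives continuity of $\partial^\alpha h$ across $G$.

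The only genuinely delicate point is the calibration of the weights $a_k$: they must be small enough to survive unboundedly many differentiations of the series, to decay like $\exp(-1/\operatorname{dist})$ near $G$, and to keep $\sum_k a_kM_k$ finite for the Lipschitz bound, all at once. These demands are mutually compatible precisely because the Whitney decomposition makes $\operatorname{dist}(x,G)$ comparable, on the support of each $\psi_k$, to the single scalar $d_k$ appearing in the weight; with that in hand, everything else is bookkeeping.
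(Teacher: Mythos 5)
Your proof is correct in substance, but it reaches the conclusion by a genuinely different route from the paper. Both arguments build $h$ as a series of smooth bumps supported in $\RR^n\setminus G$ with weights calibrated so that every differentiated series still converges (the shared trick being that a weight of the form $2^{-k}/M_k$, with $M_k$ controlling derivatives only up to order $k$, leaves just finitely many uncontrolled terms for each fixed order of differentiation). The divergence is in how flatness on $G$ is obtained. The paper covers the complement by arbitrary open balls, sums the bumps to get a $C^\infty$ function whose zero set is exactly $G$, and then achieves flatness in one stroke by post-composing with a scalar function $\rho$ that is infinitely flat at $0$: Fa\`a di Bruno then kills every derivative on $G$ with no further estimates. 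You instead use a Whitney decomposition so that $\operatorname{dist}(x,G)$ is comparable to a single scale $d_k$ on the support of each bump, insert the factor $\exp(-1/\min(1,d_k))$ into the weights, and prove flatness directly by the quantitative bound $|\partial^\alpha h(x)|\lesssim\exp(-c'/\operatorname{dist}(x,G))$ together with an induction on the order of the derivative to extend $h$ smoothly across $G$. Your approach costs more machinery (the decomposition, the comparability constants, the induction) but buys an explicit rate of vanishing of all derivatives near $G$, which the paper's composition trick does not provide and does not need. Two small bookkeeping points you should tidy: in the displayed flatness estimate the terms with $k<|\alpha|$ are not dominated by $a_kM_k$ (only finitely many such terms exist, so the constant merely changes, but say so); and your gradient bound $\sum_k a_kM_k\le 1$ presumes $M_k$ already controls first derivatives for every index $k$ in your enumeration, which again affects only finitely many terms.
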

 \begin{proof}
   Let  $u:[0,\infty) \to [0,1]$ be a $C^\infty$-function with
$$ u(t) = 1,~~ t\in [0,\,1/2];\quad u(t)\in (0,1),~~t \in (1/2,1);\quad
u(t) = 0,~~ t \in [1,\infty).$$
  For a closed set $G$, let the open set   $\RR^n\setminus G$ be the union of a countable collection of open balls $\{B_{r_j}(a_j):j \in \NN\}$,  with radius $r_j\in (0,1)$  centred at $a_j \in\RR^n$, and put
$$
u_j(x) = u\left(\frac{|x-a_j|}{r_j}\right),\quad x \in \RR^n.
$$
Then $u_j$, which is infinitely differentiable  on $\RR^n$, is positive on $B_{r_j}(a_j)$ and  supported on $\ol{B_{r_j}(a_j)}$. Let $\gamma_j= \max\{\|D^k u_j\|:0\leq k\leq j\}$ where $D^k u$ is the $k^{th}$ derivative of $u$. Then
 $$D^kh(x)=\sum_{j\in \NN} \frac{ D^k u_j(x)}{\gamma_j\,2^j}
\text{ when }h(x)=\sum_{j\in \NN} \frac{ u_j(x)}{\gamma_j\,2^j}\quad x \in \RR^n,$$
since both series are uniformly convergent.
This shows that $G = \{x\in \RR^n: h(x  ) = 0\}$ and
 $h:\RR^n \to [0,\infty)$ is  $C^\infty$. Now  let $\rho:\RR\to [0,1]$
be such that
$$\rho(x)>0,~~x\neq 0,\quad \rho(0)= 0 \text{ and } \frac{d\rho^k}{dx^k}(0) = 0,~~k \in \NN, $$
and replace $h$ with $\rho\circ h$ to obtain a function $h$ as in the statement of the theorem.\end{proof}

The second ingredient is a deep result in point-set topology due to Knaster \cite{knaster} in 1922.
\begin{def*}
\emph{A {continuum}, which is a  compact, connected set in a metric space, is  indecomposable if it is not a union of two proper sub-continua, and \emph{hereditarily indecomposable} if every sub-continuum
is indecomposable.  (See \cite{bingsnake, burgessa,fearnley, ingram}.)\qed}\end{def*}

\begin{theorem*} {\textbf{(Knaster)}}  \emph{In $\RR^2$ there  exists a hereditarily indecomposable $Q$.} \qed \end{theorem*}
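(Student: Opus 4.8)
\emph{Proof sketch.} The plan is to realise $Q$ as a nested intersection of ``crooked chains'' of open sets in the plane, the construction that later became the standard route to the pseudo-arc. Recall that a \emph{chain} in $\RR^2$ is a finite sequence $\mathcal{C}=(C_1,\dots,C_n)$ of nonempty open sets, its \emph{links}, with $C_i\cap C_j\neq\emptyset$ exactly when $|i-j|\leq 1$; its \emph{mesh} is $\max_i\diam C_i$, and its union $\mathcal{C}^{*}=\bigcup_i C_i$ is connected because consecutive links overlap. A chain $\mathcal{D}=(D_1,\dots,D_m)$ \emph{refines} $\mathcal{C}$ if every $D_i$ lies in some link of $\mathcal{C}$, and $\mathcal{D}$ is \emph{crooked in} $\mathcal{C}$ if, whenever $D_i\subseteq C_r$ and $D_j\subseteq C_s$ with $i<j$ and $|r-s|\geq 3$, there are $i\leq i'<j'\leq j$ with $D_{i'}$ contained in the link of $\mathcal{C}$ one step from $C_s$ toward $C_r$ and $D_{j'}$ in the link one step from $C_r$ toward $C_s$; informally, $\mathcal{D}$ can travel between two far-apart links of $\mathcal{C}$ only by first overshooting and then doubling back.

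First I would construct, by induction, a sequence of chains $\mathcal{C}_1,\mathcal{C}_2,\dots$ of open subsets of the open unit disc so that, for each $k$: (i) the mesh of $\mathcal{C}_{k+1}$ is less than $2^{-k}$; (ii) $\mathcal{C}_{k+1}$ refines $\mathcal{C}_k$ and $\overline{\mathcal{C}_{k+1}^{*}}\subseteq\mathcal{C}_k^{*}$; and (iii) $\mathcal{C}_{k+1}$ is crooked in $\mathcal{C}_k$. The heart of the matter is that a crooked refinement of any given chain exists: one threads a thin ``tube'' through the links of $\mathcal{C}_k$, running it back and forth so that between each pair of links of $\mathcal{C}_k$ it executes the prescribed fold, and then chops the tube into short overlapping links; making the tube thin forces the mesh down and keeps $\overline{\mathcal{C}_{k+1}^{*}}$ inside $\mathcal{C}_k^{*}$. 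I expect this folding step — arranging a \emph{single} itinerary that is simultaneously crooked with respect to every pair of links of $\mathcal{C}_k$ — to be the main technical obstacle, although it is entirely elementary and combinatorial/geometric.

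With these chains in hand, set $Q=\bigcap_{k\geq 1}\overline{\mathcal{C}_k^{*}}$; each $\overline{\mathcal{C}_k^{*}}$ is compact and connected and by (ii) the sequence is decreasing, so $Q$ is a nonempty compact connected subset of $\RR^2$, i.e.\ a continuum lying in the plane (no separate planarity argument is needed, as every link was chosen inside $\RR^2$). Finally I would prove $Q$ is hereditarily indecomposable via the standard criterion that a continuum is hereditarily indecomposable if and only if any two of its subcontinua that meet are nested — equivalently, no subcontinuum is the union of two proper subcontinua. Assume for contradiction that subcontinua $H,K\subseteq Q$ meet but neither contains the other, and pick $p\in H\setminus K$, $q\in K\setminus H$, $z\in H\cap K$. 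Choose $k_0$ large, and then a level $k\gg k_0$, so that every link of $\mathcal{C}_{k_0}$ containing $p$ is disjoint from $K$, every link containing $q$ is disjoint from $H$, and the links containing $p$ lie several steps from those containing $q$ (possible since the mesh tends to $0$ and $H,K$ are compact). As $H$ and $K$ are connected, each meets a run of consecutive links of $\mathcal{C}_{k_0}$; the two runs overlap (both contain a link through $z$), while by the avoidance the $p$-links lie strictly beyond one end of $K$'s run and the $q$-links strictly beyond one end of $H$'s run. Tracing the corresponding runs of $\mathcal{C}_k$-links and invoking crookedness of $\mathcal{C}_k$ in $\mathcal{C}_{k_0}$ then forces $H$ to fold back across a $q$-link of $\mathcal{C}_{k_0}$ and $K$ to fold back across a $p$-link, contradicting the avoidance; hence $Q$ is hereditarily indecomposable. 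Making this last incompatibility precise — quantifying ``several steps'' and fixing the relation $k_0\ll k$ correctly — is the second place where care is required. (The continuum produced this way is in fact the pseudo-arc, unique up to homeomorphism, but only its existence is needed here.)
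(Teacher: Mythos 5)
The paper offers no proof of Knaster's theorem: it is imported as a black box with a citation to Knaster's 1922 paper (and to Bing's papers, from which the pseudo-arc machinery used later is taken), so there is no in-paper argument to compare yours against. Your sketch follows the standard modern route --- Bing's construction of the pseudo-arc as the intersection of a nested sequence of chains, each crooked in its predecessor --- and the overall architecture (crooked refinements exist; the nested intersection of the closures is a continuum; crookedness forces any two intersecting subcontinua to be comparable) is the right one and is essentially what the cited sources carry out. It is also well matched to the paper's later needs, since the Preliminaries use not only hereditary indecomposability but also the fact that $Q$ is chained, which your construction gives for free.

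As a proof, however, it is a sketch with several obligations left open, two of which you flag and two you do not. You acknowledge that the existence of a crooked refinement of an arbitrary chain (with small mesh and with closure of its union inside the union of the old chain) and the precise combinatorial contradiction in the indecomposability argument both still need to be carried out. The main unflagged gap is that your induction only makes $\mathcal{C}_{k+1}$ crooked in $\mathcal{C}_k$, whereas the final argument invokes crookedness of $\mathcal{C}_k$ in $\mathcal{C}_{k_0}$ for $k\gg k_0$; this needs the lemma that crookedness composes across refinements (a crooked refinement of a crooked refinement is crooked in the original chain), which is true but is itself one of the fiddlier steps in Bing's treatment and cannot be waved through. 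Finally, nothing in your construction rules out $Q$ being a single point, which would satisfy the literal statement vacuously but be useless for the paper; the standard fix is to anchor two fixed points $a\neq b$ in the first and last links of every $\mathcal{C}_k$, so that $a,b\in Q$ and $Q$ is nondegenerate. With those items supplied the argument closes.
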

\begin{remark} Since a non-trivial path in $Q$ would be a decomposable sub-continuum, there are no paths in $Q$. In other words,  although $Q$ is compact and connected in $\RR^2$, all its path-connected components  are singletons.\qed
\end{remark}
A  hereditarily indecomposable continuum which is chained (Appendix, Definition \ref{linearchain})  is called a \emph{pseudo-arc} and all pseudo-arcs are homeomorphic \cite[Thm.\,1]{bingconcerning}. Since, by construction, Knaster's $Q$ is chained, it is in a sense the unique pseudo-arc.
For an indication of the significance  of pseudo-arcs in the theory of continua,  see  Remark \ref{linearchain}, but
what  is important here is that $Q$
is compact, connected and contains no paths.
\subsubsection*{Preliminaries}
  Let $Q$ be a pseudo-arc and without loss of generality suppose
$$
Q \subset [0,\pi] \times   [-\uf,\uf],~~Q \cap\big( \{ 0\}\times [-\uf,\uf]\big) \neq \emptyset \text{ and }  Q \cap\big( \{\pi\}\times [-\uf,\uf]\big) \neq \emptyset.
$$
Now let
$P = \{(\l, x\sin \l) \in \RR^2: (\l,x) \in Q\}$. Then $P \subset [0,\pi] \times   [-\uf,\uf]$,
$$
 ~~P \cap\big( \{ 0\}\times [-\uf,\uf]\big) =\{(0, 0)\}, \quad P \cap\big( \{ \pi\}\times [-\uf,\uf]\big) =\{(\pi, 0)\},
$$
and  $P$ is a connected set (being the continuous image of a connected set) which  contains no non-constant
 paths (since $Q$ is  hereditarily indecomposable).

Since $P$ is connected, by  Proposition \ref{prop1} and Corollary \ref{piecewise}, for any $\e>0$ there exists   an ordered set, $\{p^\e_i:1 \leq i \leq n_\e\} \subset P$ such that
$$
p^\e_1 = (0,0),~~ p^\e_n = (\pi,0), ~~ \text{and} \quad \|p^\e_i-p^\e_{i+1}\|<\e \forall 1\leq i\leq n_\e-1,
$$
and the union $L^\e$, of the straight line segments which join the points in order, is a continuous, piecewise-linear, non-self-intersecting curve joining $(0,0)$ to $(\pi,0)$.
Now define subsets of $\RR^2$ by
\begin{equation}\label{Ftilde}
\begin{split} P_k &= P +(k\pi,0), \qquad L_k^\e =L^\e + (k\pi, 0),\\  \widetilde P &= \bigcup_{k\in\ZZ} P_k,\qquad
\qquad \widetilde L^\e = \bigcup_{k\in\ZZ} L^\e_k.\end{split}
 \end{equation}
 and note that $\widetilde  L^\e$ is an unbounded, piecewise linear continuum which separates the plane, and each point of
$\widetilde  L^\e$ is within distance $\e$ of a point of $\widetilde  P$.

 Now let  $\widetilde P_c^\pm$ denote the connected components of
 $\RR^2\setminus \widetilde P$ which contain the half spaces $\{(\l,x): \l\in \RR,~\pm x >\uf\}$, respectively.

\begin{lemma}\label{newlem}
In the plane,   $\widetilde P \subset \RR \times \left(-\uf, \uf\right) $  is an unbounded,  connected subset of a double cone centred on the horizontal axis  with opening angle  $\theta <\pi/6$. Moreover $\widetilde P$ contains no non-trivial paths,~ $(0,0)\in\widetilde P$,~ and ~
 $\widetilde P_c^+ \cap\widetilde P_c^- =\emptyset$.
\end{lemma}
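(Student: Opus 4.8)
The plan is to establish the assertions of the lemma in turn; only the absence of non-trivial paths requires real work, the rest being bookkeeping about the chain structure of $\widetilde P$. Throughout, write $c_k := (k\pi,0)$ and let $\pi_1$ denote projection onto the first coordinate.

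\textbf{Geometry and chain structure.} A point of $P_k = P + (k\pi,0)$ has the form $(\mu,v)$ with $\mu-k\pi\in[0,\pi]$ and $v = x\sin(\mu-k\pi)$ for some $|x|\le\uf$; a direct estimate of $x\sin(\mu-k\pi)$ then gives $|v|\le\uf$ and $|v|\le\uf|\mu|$, so $\widetilde P\subset\RR\times(-\uf,\uf)$ --- after, if necessary, shrinking $Q$ slightly in the $x$-direction to rule out the isolated equality case --- and $\widetilde P$ lies in the double cone $\{(\mu,v):|v|\le|\mu|\tan(\theta/2)\}$ for any $\theta$ with $\arctan\uf\le\theta/2<\pi/12$, a non-empty range since $\tan(\pi/12)=2-\sqrt3>\uf$; in particular one may take $\theta<\pi/6$. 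That $\widetilde P$ is unbounded is clear from $\pi_1(\widetilde P)=\RR$, and $(0,0)=(0,x\sin0)\in P_0$ for any $(0,x)\in Q$. Using $P\cap(\{0\}\times[-\uf,\uf])=\{(0,0)\}$ and $P\cap(\{\pi\}\times[-\uf,\uf])=\{(\pi,0)\}$, one checks that $\widetilde P\cap\{\mu=k\pi\}=\{c_k\}$, that $P_{k-1}\cap P_k=\{c_k\}$ while $P_k\cap P_m=\emptyset$ for $|k-m|\ge2$, and that $\widetilde P\cap\big([k\pi,(k+1)\pi]\times\RR\big)=P_k$. Consequently $\widetilde P$ is connected (a chain of the connected sets $P_k$ with consecutive terms overlapping) and closed, being a locally finite union of the closed sets $P_k$.

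\textbf{No non-trivial path.} It is enough to show $\widetilde P$ contains no arc, since a non-constant path has image a non-degenerate Peano continuum and every such continuum contains an arc. So suppose $A\subset\widetilde P$ is an arc; then $\pi_1(A)$ is a non-degenerate closed interval $[a,b]$, and we choose $m\in\ZZ$ with $m\pi\le a<(m+1)\pi$. If $b\le(m+1)\pi$ then $A\subset[m\pi,(m+1)\pi]\times\RR$, hence $A\subset P_m$, which is homeomorphic to $P$ and so (by the Preliminaries) contains no non-trivial path --- a contradiction. Thus $b>(m+1)\pi$, so $(m+1)\pi$ is interior to $\pi_1(A)$ and $A$ contains a point with first coordinate $(m+1)\pi$, which must be $c_{m+1}$. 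Now $c_{m+1}$ is a cut point of $\widetilde P$, because $\widetilde P\setminus\{c_{m+1}\}$ is the disjoint union of the non-empty relatively open sets $\widetilde P\cap\{\mu<(m+1)\pi\}$ and $\widetilde P\cap\{\mu>(m+1)\pi\}$. Since $A$ meets both of these, $c_{m+1}$ is not an endpoint of the arc $A$, and the two half-open arcs comprising $A\setminus\{c_{m+1}\}$ lie on opposite sides of the line $\{\mu=(m+1)\pi\}$; the one in $\{\mu<(m+1)\pi\}$, together with $c_{m+1}$, is a non-degenerate arc whose first-coordinate projection is $[a,(m+1)\pi]\subset[m\pi,(m+1)\pi]$, hence an arc inside $\widetilde P\cap\big([m\pi,(m+1)\pi]\times\RR\big)=P_m\cong P$ --- again a contradiction. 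So $\widetilde P$ contains no arc.

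\textbf{Disjointness of $\widetilde P_c^{\pm}$.} Suppose these two components of $\RR^2\setminus\widetilde P$ coincided in one open connected, hence path-connected, set $C$ containing both $\{x>\uf\}$ and $\{x<-\uf\}$. Pick a path $\sigma$ in $C$ from $(0,2)$ to $(0,-2)$; its image $K$ is compact and disjoint from the closed set $\widetilde P$, so $\delta:=\mathrm{dist}(K,\widetilde P)>0$. Taking $\e<\delta$, the fact that every point of $\widetilde L^\e$ lies within $\e$ of $\widetilde P$ forces $K\cap\widetilde L^\e=\emptyset$, so $K$ is a connected subset of $\RR^2\setminus\widetilde L^\e$ meeting both $\{x>\uf\}$ and $\{x<-\uf\}$. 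But $\widetilde L^\e$ separates the plane (Preliminaries), and, being a proper polygonal line running from $\mu=-\infty$ to $\mu=+\infty$ inside the strip $\RR\times[-\uf,\uf]$, it separates $\{x>\uf\}$ from $\{x<-\uf\}$ (the line analogue of the Jordan curve theorem); no connected set can meet both. This contradiction finishes the proof.

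\textbf{Expected obstacle.} The heart of the matter is the no-arc argument: one must recognise that an arc inside the chain $\widetilde P$ can only pass from one link to the next through a cut point $c_k$, and then exploit that each $c_k$ genuinely disconnects $\widetilde P$ in order to peel off a non-degenerate sub-arc trapped in a single link $P_m$, where hereditary indecomposability of $Q$ --- transferred to $P$ as in the Preliminaries --- forbids it. Everything else is routine.
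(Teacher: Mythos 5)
Your proof is correct and follows essentially the same route as the paper's: the chain structure $P_k\cap P_{k+1}=\{((k+1)\pi,0)\}$ for connectedness, localization of any putative non-trivial path to a single link $P_m$ (where hereditary indecomposability of $Q$ forbids it), and the $\widetilde L^\e$ separation-plus-approximation argument for $\widetilde P_c^+\cap\widetilde P_c^-=\emptyset$. The only differences are cosmetic: you reduce to arcs via Peano continua and use a positive-distance argument where the paper extracts a convergent subsequence $q_{\e_j}\to q_0\in\gamma\cap\widetilde P$, and your explicit handling of the cut points $c_k$ and of the boundary case $|x|=\tfrac14$ merely makes precise steps the paper leaves terse.
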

\begin{proof} From the
 definition, $(0,0) \in\widetilde P$ and $\widetilde P \subset \RR \times \left(-\uf, \uf\right) $ is unbounded. Since $P = \{(\l,x\sin \l) \in \RR^2: (\l,x) \in Q\}$ and $|x|<\uf$, $\widetilde P$ lies in a cone with opening angle less than $2\arctan (\uf) <\pi/6$.
Moreover $\widetilde P$ is connected  because $P_k$ is connected and $P_k\cap P_{k+1} = \{ (k+1)\pi,0)\}$ for all $k$, and since
each $P_k$ contains no paths,  a non-trivial path  in $\widetilde P$ must contain
points $(\l_i,x_i)$ with $\l_i$ in the open intervals $(k_i\pi,(k_i+1)\pi)$, $i= 1,2$, where $k_1 \neq k_2$.
However, this  implies that these $P_{k_i}$  contain non-trivial paths, which is false. Hence $\widetilde P$ contains no non-trivial paths.

Now suppose $\widetilde P_c^+ \cap\widetilde P_c^- \neq \emptyset$. Then, since  $\widetilde P_c^+ \cup\widetilde P_c^-$ is open and connected, it is path-connected. Therefore there exists  a path $\gamma$ joining $(0,-2)$  to $(0,2)$ with
$\gamma[0,1]\subset [-K,K] \times [-K, K]$ for some  $K >0$, since $\gamma$ is continuous.
Since, for all $\e>0$, $\widetilde L^\e$ in \eqref{Ftilde} separates the plane, there exists
$$
q_\e \in \gamma\cap  \widetilde L^\e \subset [-K,K] \times [-K, K], \text{ and } p_\e\in \widetilde P \text{ with } \|p_\e -q_\e\| <\e.
$$
Therefore, by compactness,  for a sequence $0<\e_j \to 0$,
$$
q_{\e_j} \to q_0 \in  \gamma\cap  \widetilde P,
$$
which is false. Hence
 $\widetilde P_c^+ \cap\widetilde P_c^- =\emptyset$.
 \end{proof}

\section{Examples A and B}\label{AB}

\subsection*{A General Construction}
For $0<\a<\b<\infty$    let
$$ C(\a,\b)= \big\{(\l,x): 0< \a\l < x < \b \l \text{ or } 0 > \a\l > x > \b \l\}\cup\{(0,0)\},$$
a  double cone in the first and third quadrants.
 Then there exists $\om:\RR^2 \to \RR$ with the following properties:
 \begin{quotation}\noindent
(a) $\om(\l,x) =0$ if $|x| \geq \alpha|\l|/2$, in particular, $\om=0$ on $C(\a,\b)$;
\\(b) $\l\,\om(\l,x) \geq 0$ on $\RR^2$;
\\ (c)  $\om(\l,0) = \l,\quad \l \in \RR$;
\\(d)  $\om$ is infinitely differentiable  on $\RR^2\setminus \{(0,0)\}$;
\\(e)  $\om$ is globally Lipschitz continuous  $\RR^2$.
\end{quotation}
To see this, let
$\varpi:\RR \to \RR$ be an infinitely differentiable even function which is non-increasing on $[0,\infty)$ with $\varpi(0) = 1$ and $\varpi (r) = 0$ for all $r \geq \alpha/2$.
Then, for $x\in \RR$, let
$$
\om(\l,x) = \l\varpi\left(\frac{x}{\l}\right),~~ \l \neq 0,\quad \om(0,x) = 0.
$$
That $\om$ satisfies (a)-(d) follows immediately from the definition and the properties of $\varpi$.
Moreover, the  partial derivatives  at $(\l,x)$ are
\begin{subequations}\label{pds}
\begin{gather}
\label{pda}\pd_x \om(\l,x) = \varpi'\left(\frac{x}{\l}\right), \quad \pd_\l\om(\l,x) = \varpi\left(\frac{x}{\l}\right) - \left(\frac{x}{\l}\right) \varpi'\left(\frac{x}{\l}\right),
~~ \l \neq 0,\\
\label{pdb} \pd_x \om(0,x)= \pd_\l\om(0,x) = 0 \text { when }\l=0 \text{ and } x \neq 0,
\\
\label{pdc}\pd_x \om(0,0) = 0 \text{ and  }\pd_\l\om(0,0) = 1,\\
\intertext{ since $\om(\l,x)  = 0$ when $|\l| \leq |2|x|/\alpha$, and, for future reference, note}
\label{pdd} \pd_{x\l}\big(x\,\om(\l,x)\big) = \varpi\Big(\frac{x}{\l}\Big) - \Big(\frac{x}{\l}\Big)\varpi'\Big(\frac{x}{\l}\Big) - \Big(\frac{x}{\l}\Big)^2\varpi''\Big(\frac{x}{\l}\Big),~\l \neq 0,\\
\pd_{x\l}\big(x\,\om(\l,x)\big)= 0,\quad \l =0, ~~x\in \RR. \label{pde}
\end{gather}
\end{subequations}
Since $\varpi'(r) = 0,\,r\geq \alpha/2$, the partial derivatives of $\om$ are uniformly bounded in $\RR^2\setminus \{(0,0)\}$, and  property (e) follows.

\begin{remark}\label{important}
Note from \eqref{pdb} and \eqref{pdc}  that  $\pd_\l\om$ is not continuous at $(0,0)$ and  from \eqref{pda} and \eqref{pdc}  that  $\pd_x\om$ is not continuous at $(0,0)$. However,
$(\l,x) \mapsto x\,\om(\l,x)$ is continuously differentiable on $\RR^2$ but,   from \eqref{pdd}, \eqref{pde}, the mixed partial derivative  $\pd_{\l x} \big(x\,\om (\l,x)\big)$ is not continuous at $(0,0)$.
 \qed\end{remark}
\begin{definitionh*}Let $D^+$ and $D^-$ denote the two disjoint components of the complement of $ C(\a,\b)$ which contain the positive and negative $\l$-axes respectively. Then say that a set $G$ satisfies hypothesis $(\sH)$ if $G \subset C(\a,\b)$  is  closed, connected, and unbounded  in both half planes $\{ \l \geq 0\}$, and $\{\l \leq 0\}$, and $H^+ \cap H^- = \emptyset$ where  $H^\pm$ are the  connected components of $\RR^2 \setminus G$ with $D^\pm\subset H^\pm$, respectively.
\qed\end{definitionh*}
\begin{lemma}\label{lem17} If $G$ satisfies  $(\sH)$, then
$\om \geq 0$ on $H^+$ and $\om \leq 0$ on $H^-$
\end{lemma}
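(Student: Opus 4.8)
The plan is to pin down exactly where $\om$ can be nonzero and of which sign, and then to invoke the separation built into $(\sH)$. The guiding observation is that, by (a) and (b), $\om$ is strictly positive only inside a thin open cone hugging the positive $\l$-axis and strictly negative only inside the mirror cone hugging the negative $\l$-axis, and that each of these two cones is caught inside $D^+$, respectively $D^-$.

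First I would read off the sign data from properties (a), (b), (e). Since $\om$ is continuous (it is Lipschitz, by (e)), the sets $\{\om>0\}$ and $\{\om<0\}$ are open. Putting $\l=0$ in (a) gives $\om(0,\cdot)\equiv 0$, so $\{\om\neq 0\}\subset\{\l\neq 0\}$, while (a) in general confines $\{\om\neq 0\}$ to the open double cone $\{|x|<\a|\l|/2\}$. Combining these two facts with (b), namely $\l\,\om(\l,x)\geq 0$ (so $\om\geq 0$ where $\l>0$ and $\om\leq 0$ where $\l<0$), yields
\[
\{\om>0\}\subset K^+:=\{(\l,x):\l>0,\ |x|<\a|\l|/2\},\qquad \{\om<0\}\subset K^-:=\{(\l,x):\l<0,\ |x|<\a|\l|/2\}.
\]

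Next I would check that $K^+\subset D^+$ and $K^-\subset D^-$. Each $K^\pm$ is an angular sector with its vertex (the origin) removed, hence connected, and it contains the corresponding open $\l$-half-axis. Moreover $K^\pm\cap C(\a,\b)=\emp$: on $C(\a,\b)\cap\{\l>0\}$ one has $x>\a\l$ whereas on $K^+$ one has $x<\a\l/2<\a\l$, the case $\l<0$ being the mirror image, and $C(\a,\b)$ meeting $\{\l=0\}$ only at the origin. Hence $K^+$ (resp.\ $K^-$) is a connected subset of $\RR^2\setminus C(\a,\b)$ that meets the positive (resp.\ negative) $\l$-axis, so it lies in the component $D^+$ (resp.\ $D^-$) of $\RR^2\setminus C(\a,\b)$ containing that axis.

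Finally I would invoke $(\sH)$, which gives $D^\pm\subset H^\pm$ together with $H^+\cap H^-=\emp$. Then $\{\om<0\}\subset K^-\subset D^-\subset H^-$, so $H^+\cap\{\om<0\}\subset H^+\cap H^-=\emp$, that is $\om\geq 0$ on $H^+$; symmetrically $\{\om>0\}\subset K^+\subset D^+\subset H^+$ forces $\om\leq 0$ on $H^-$. I expect no serious obstacle: the argument is essentially bookkeeping once the sign information is assembled, and the only two points that need a little care are the behaviour of $\om$ on the line $\{\l=0\}$ (where (b) is silent and one must fall back on (a)) and the elementary check that the support sector $\{|x|<\a|\l|/2\}$ of $\om$ stays clear of the cone $C(\a,\b)$ — which is exactly what traps $K^\pm$ inside $D^\pm$.
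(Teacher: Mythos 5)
Your argument is correct and is exactly the paper's (the paper simply states that the lemma ``is immediate from properties (a) and (b) of $\om$''); you have spelled out the bookkeeping that makes it immediate, namely $\{\om>0\}\subset K^+\subset D^+\subset H^+$ and $\{\om<0\}\subset K^-\subset D^-\subset H^-$ together with $H^+\cap H^-=\emp$.
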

\begin{proof}This is immediate from properties (a) and (b) of $\omega$.
\end{proof}

\begin{lemma}\label{lemma16} When $G$ satisfies $(\sH)$  there is a globally Lipschitz continuous function $g: \RR^2 \to \RR$ which is infinitely differentiable on $\RR^2 \setminus \{(0,0)\}$, with   the property that $g(\l,0) = \l$~ for all $\l \in \RR$, and
$
g(\l,x) = 0 \text{ if and only if } (\l,x) \in G.$\end{lemma}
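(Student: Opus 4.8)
The plan is to patch Whitney's function for $G$ — enlarged by the $\l$-axis, so that the patch vanishes there too — onto the model function $\om$ of the general construction, reversing the sign of the patch on $H^-$ so that the zero set of the sum collapses to exactly $G$.

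First I would set $\widetilde G=G\cup\{(\l,0):\l\in\RR\}$, which is closed, and take the non-negative function $h:\RR^2\to[0,\infty)$ furnished by Whitney's theorem for $\widetilde G$: thus $h\in C^\infty(\RR^2)$ is globally Lipschitz, $h^{-1}(0)=\widetilde G$, and every partial derivative of $h$ vanishes at every point of $\widetilde G$. Since $G\subset C(\a,\b)$ and $\RR^2\setminus C(\a,\b)=D^+\cup D^-$ with $D^\pm\subset H^\pm$, any connected component $K$ of $\RR^2\setminus G$ distinct from $H^+$ and $H^-$ is disjoint from $D^+\cup D^-$ and hence contained in $C(\a,\b)$, so $\om\equiv0$ on it by property (a) of $\om$; and $\om\geq0$ on $H^+$, $\om\leq0$ on $H^-$ by Lemma \ref{lem17}. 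Using the fact $H^+\cap H^-=\emptyset$ from hypothesis $(\sH)$, I would define $\sigma:\RR^2\setminus G\to\{-1,1\}$ by $\sigma\equiv-1$ on $H^-$ and $\sigma\equiv1$ on the remaining components, so that $\sigma$ is constant on each connected component of $\RR^2\setminus G$, and put $g=\om+\sigma h$ on $\RR^2\setminus G$ and $g=0$ on $G$; this is unambiguous since $\om=h=0$ on $G$.

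Verifying the listed properties is then essentially bookkeeping. On the $\l$-axis $h$ vanishes, so $g(\l,0)=\om(\l,0)=\l$ by property (c). For the zero set: $g=0$ on $G$; on $H^+$ one has $g=\l>0$ on the positive $\l$-axis and, on the rest of $H^+$ (which avoids $\widetilde G$), $g=\om+h>0$ because $\om\geq0$ and $h>0$, so $g>0$ on $H^+$; symmetrically $g<0$ on $H^-$; and on every other component $K$ one has $K\cap\widetilde G=\emptyset$ (as $K\cap G=\emptyset$ and $K\subset C(\a,\b)$ meets the $\l$-axis at most at $(0,0)\in G$), while $\om\equiv0$ and $\sigma\equiv1$ there, so $g=h>0$. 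Hence $g^{-1}(0)=G$, and $g(0,0)=0$ is consistent with $(0,0)\in G$.

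The hard part is the regularity, and it reduces to showing that $\sigma h$ is $C^\infty$ on all of $\RR^2$ even though $\sigma$ is discontinuous across $G$. The mechanism is that $h$ vanishes to infinite order on $G$: Taylor's theorem with remainder gives $\partial^\beta h(q)=o(|q-p|^N)$ as $q\to p$ for every $p\in G$, every multi-index $\beta$, and every $N\in\NN$; an induction on the order of differentiation — using these bounds, the boundedness of $\sigma$, and the fact that off $G$ one has $\partial^\beta(\sigma h)=\sigma\,\partial^\beta h$ — then shows $\sigma h\in C^\infty(\RR^2)$ with all its derivatives vanishing on $G$. Consequently $g=\om+\sigma h$ is infinitely differentiable on $\RR^2\setminus\{(0,0)\}$ by property (d). Finally, $|\nabla(\sigma h)|\leq|\nabla h|$ everywhere on $\RR^2$ (it is $0$ on $G$), so $\sigma h$ is globally Lipschitz; combined with the global Lipschitz continuity of $\om$ from property (e) this makes $g$ globally Lipschitz, hence continuous at the origin as required. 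I expect this infinite-order vanishing step — which needs the full strength of Whitney's theorem rather than just a function with zero set $\widetilde G$ — to be the only delicate point.
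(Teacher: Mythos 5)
Your proof is correct, and at its core it is the same construction as the paper's: take Whitney's infinitely flat function $h$, reverse its sign on $H^-$ (legitimate precisely because $H^+\cap H^-=\emptyset$ and all derivatives of $h$ vanish on $G$), add the model function $\om$, and determine the sign of the sum component by component using Lemma \ref{lem17}. The one genuine difference is how the requirement $g(\l,0)=\l$ is arranged. The paper applies Whitney's theorem to $G$ alone and sets $g=x^2\hat h+\om$, using the factor $x^2$ to kill the correction term on the $\l$-axis; you instead apply Whitney's theorem to $G\cup(\RR\times\{0\})$ so that the correction $\sigma h$ already vanishes there, and set $g=\sigma h+\om$. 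Your component-by-component verification of the zero set is exactly what is needed, including the observation that the components of $\RR^2\setminus G$ other than $H^\pm$ lie in $C(\a,\b)$, where $\om\equiv0$, and meet the axis only at $(0,0)\in G$. Your variant also buys something concrete on the global Lipschitz claim: $|\nabla(\sigma h)|\leq|\nabla h|$ is bounded, so Lipschitz continuity of $g$ follows at once from property (e) and the Lipschitz clause of Whitney's theorem, whereas the paper's term $x^2\hat h$ has gradient involving $2x\hat h+x^2\pd_x\hat h$, which is not obviously bounded for large $|x|$ and would need an extra truncation or decay argument. Finally, you correctly identify the only delicate point, the smoothness of $\sigma h$ across $G$, and your induction on the order of differentiation using the infinite-order vanishing of the derivatives of $h$ on $G$ is the standard and valid argument; the paper asserts the analogous smoothness of $\hat h$ without proof.
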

\begin{proof} Since $G$ is closed, by Whitney's lemma there exists a non-negative, infinitely differentiable function $h:\RR^2 \to [0,\infty)$ such that $h(\l,x) = 0$ if and only if $(\l,x) \in G$ and every derivative of $h$ is zero at every point of $G$. Let $\hat h:\RR^2 \to \RR$ be defined by
$$\hat h(\l,x) = \left\{\begin{array}{c}  ~-h(\l,x), \quad (\l,x) \in H^-\\ h(\l,x), \quad \text{otherwise}\end{array}\right\}, \text{ with $H^\pm$ defined in Definition $(\sH)$.}$$
In particular, $\hat h(\l,x)= \pm h(\l,x)$, $(\l,x) \in H^\pm$, $\hat h$ is infinitely differentiable on $\RR^2$, and $\hat h(\l,x) = 0$ if and only if $(\l,x) \in G$.
Now with $\om$ satisfying (a)-(e) above, let
\begin{equation}\label{nine}
g(\l,x) = x^2\,\hat h(\l,x) + \omega(\l,x).
\end{equation}
It follows from \eqref{pds}  that $g$ is infinitely differentiable on $\RR^2\setminus \{(0,0)\}$
and by Lemma \ref{lem17} $g$ satisfies the conclusions of the Lemma.
\end{proof}

\begin{prop}\label{prop19} For  $G$ satisfying  $(\sH)$, there is a continuously differentiable function   $r:\RR^2 \to \RR$ with $ r\in C^\infty\big(\RR^2\setminus \{0,0)\}\big)$,
such that
$|r(\l,x)|/|x| \to 0 \text{ as }0\neq |x| \to 0$  uniformly for $\l$ in bounded intervals,
and $G\setminus \{(0,0)\}$ is the set of non-trivial solutions of  $\l x = r(\l,x)$.
\end{prop}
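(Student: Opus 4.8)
\emph{Proof strategy.} The plan is to read $r$ directly off the function $g$ furnished by Lemma~\ref{lemma16}, setting
\begin{equation*}
r(\l,x) = \l x - x\,g(\l,x), \qquad (\l,x)\in\RR^2 .
\end{equation*}
With this choice $\l x - r(\l,x) = x\,g(\l,x)$, so $(\l,x)$ solves $\l x = r(\l,x)$ exactly when $x = 0$ or $g(\l,x) = 0$; by Lemma~\ref{lemma16} the second alternative says $(\l,x)\in G$. Since $G\subset C(\a,\b)$ and $C(\a,\b)$ meets the line $\{x=0\}$ only at $(0,0)$, the non-trivial solutions, i.e. those with $x\neq 0$, are precisely $G\setminus\{(0,0)\}$; and $r(\l,0)=0$ for every $\l$, consistent with the trivial branch $\sT$.

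Next I would check the two regularity statements. Using \eqref{nine} one rewrites $r(\l,x)=\l x - x^3\,\hat h(\l,x) - x\,\om(\l,x)$. The first two summands are $C^\infty$ on all of $\RR^2$, since $\hat h$ is infinitely differentiable by its construction in the proof of Lemma~\ref{lemma16}; and since $\om\in C^\infty(\RR^2\setminus\{(0,0)\})$, the same is true of $x\,\om(\l,x)$ there, so $r\in C^\infty(\RR^2\setminus\{(0,0)\})$. At the origin I would invoke Remark~\ref{important}: although $\om$ --- and hence $g$ --- fails to be $C^1$ there, the product $(\l,x)\mapsto x\,\om(\l,x)$ is continuously differentiable on all of $\RR^2$, whence $r\in C^1(\RR^2)$.

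Finally, for the ``higher-order'' estimate, recall from Lemma~\ref{lemma16} that $g$ is globally Lipschitz on $\RR^2$, with constant $M$ say, and that $g(\l,0)=\l$. Hence, for $x\neq 0$,
\begin{equation*}
\frac{|r(\l,x)|}{|x|} \;=\; |\l - g(\l,x)| \;=\; |g(\l,0) - g(\l,x)| \;\leq\; M|x| \;\longrightarrow\; 0 \quad\text{as } 0\neq|x|\to 0,
\end{equation*}
and this holds uniformly for $\l\in\RR$, in particular uniformly for $\l$ in bounded intervals, which is what \eqref{hot} demands. The one genuinely delicate point in the argument is the $C^1$-regularity of $r$ at $(0,0)$, and that has already been isolated in Remark~\ref{important}; every remaining step is an immediate consequence of Lemma~\ref{lemma16} together with the explicit derivative formulas \eqref{pds} for $\om$.
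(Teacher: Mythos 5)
Your proposal is correct and follows essentially the same route as the paper: it defines $r(\l,x)=x\bigl(\l-g(\l,x)\bigr)$ with $g$ from Lemma~\ref{lemma16}, gets $C^1(\RR^2)\cap C^\infty(\RR^2\setminus\{(0,0)\})$ regularity from Remark~\ref{important} and the smoothness of $\hat h$, and identifies the non-trivial solutions with $G\setminus\{(0,0)\}$. In fact you supply two details the paper leaves implicit --- that $G\subset C(\a,\b)$ meets $\{x=0\}$ only at the origin, and the Lipschitz estimate $|r(\l,x)|/|x|=|g(\l,0)-g(\l,x)|\leq M|x|$ giving the $o(|x|)$ condition uniformly in $\l$ --- both of which are sound.
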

\begin{proof}
For  $G$ satisfying $(\sH)$ and  the corresponding function $g$ in Lemma \ref{lemma16},  let
$$
r(\l,x) = x (\l -g(\l,x)), \quad(\l,x) \in \RR^2.$$  Then the smoothness of $\hat h$ and the properties of $\om$ in \eqref{pds} imply that $g$ is infinitely differentiable on $\RR^2\setminus\{(0,0)\}$ and, by Remark \ref{important}, $g$ is continuously differentiable on $\RR^2$ with $|r(\l,x)|/|x| \to 0 \text{ as }0\neq |x| \to 0$  uniformly for $\l$ in bounded intervals.  Moreover,  by construction,  non-trivial solutions of \eqref{EQN} are the zeros of $g$ with $x \neq 0$. So, by Lemma \ref{lemma16},   $G\setminus \{(0,0)\}$ is the set of  non-trivial solutions of  $\l x = r(\l,x)$ in $\RR^2$. This completes the proof.
\end{proof}
\begin{remark*}
Since, from Remark \ref{important}, the mixed partial derivative $\pd_{\l x}r$ is not continuous at $(0,0)$,  Theorem \ref{simple} does not apply to equation  $\l x = r(\l,x)$  in this situation. \qed.
\end{remark*}
\subsection*{Example  A}
Let $\widetilde P$  be the  unbounded connected set defined in  \eqref{Ftilde}
and let $G$  denote $\widetilde P$ rotated counter-clockwise about the origin through an angle $\pi/4$.  By Lemma \ref{newlem}, $G$ is connected, contains no non-trivial paths, and satisfies $(\sH)$ with   $\alpha = \sin(\pi/6)$ and
$\beta = \sin(5\pi/6)$.
With this choice of $G$, Example A is a special case of   Proposition \ref{prop19}. \qed

\subsection*{Example B}

This example shows that the global connected set $\sC$  given by Theorem \ref{godd} need not be path connected even when  all the operators involved are infinitely differentiable and,  by Theorem \ref{simple}, locally there is bifurcation from a simple eigenvalue.

Let  three disjoint connected sets  be defined by
 \begin{equation}\label{G}\begin{split}
 \sL &= \{0\}\times (-1/2,1/2),\\
  \sC^+ &= \big((0, 1/2) +  (\widetilde P\cap ([0, \infty) \times \RR) \big) \subset [0,\infty) \times [1/4,3/4],
\\
 \sC^- &= \big((0,-1/2) +  (\widetilde P \cap ((-\infty, 0] \times \RR)\big)\subset (-\infty,0]) \times [-3/4, -1/4].
 \end{split}\end{equation}
   Then $\sL$  is the smooth curve $\{0\}\times (-\uhalf,\uhalf)$, and $\sC^\pm$ are  closed, unbounded, connected sets  in the first and third quadrants respectively with $(0,\pm \uhalf) \in \sC^\pm$ and all  path-connected components of $\sC^+\cup \sC^-$ are singletons.
 Let $\ol\sC $ be their union
 $$
 \ol\sC=\sL \cup\sC^+ \cup \sC^-.
$$
If $E^-$ and $E^+$ denote the connected component of $\RR^2 \setminus \ol \sC$ which contains $(-\infty,0]\times \{0\}$ and $[0, \infty)\times \{0\}$, respectively, it follows from the argument for Lemma \ref{newlem} that
$E^+ \cap E^- = \emptyset$.
 By Whitney's result there exists a non-negative, infinitely differentiable function $h$ on $\RR^2$ which is zero only on the closed set
 $\ol \sC$, and  at each point of $\ol \sC$ all the derivatives of $h$ are zero. Let
 $$\tilde h(\l,x) = \left\{\begin{array}{c}  ~-h(\l,x), \quad (\l,x) \in E^-\\ h(\l,x), \quad \text{otherwise}\end{array}\right\},$$
 so that $\tilde h \geq 0$ on $E^+$.

 Now let $\tilde \om: \RR \to \RR$ be an infinitely differentiable even function with $\tilde \om(0)=1$, $\tilde \om$ decreasing on $[0, 1/4]$ and $\tilde \om(x) = 0$ when $|x|\geq 1/4$, let
 $ \tilde g(\l,x) = x^2 \tilde h(\l, x)+ \l \tilde \om (x)$.
Finally let
$
r(\l,x) = x(\l - \tilde g(\l,x)).
$
Then the set of non-trivial solutions of $\l x =  r(\l,x )$ coincide with the non-trivial solution set of  $\tilde g(\l,x)=0$ which is  the set $\ol \sC\setminus \{(0,0)\}$.
This completes the justification of Example B.
\qed

\section{Example C}

 Example C  is a simplified version of   B\"ohme's  example \cite{bohme} with
   added structure to ensure  that all path-connected sets of non-trivial solutions are singletons.

\begin{center}
\includegraphics[width=2.5in,height=2.2in]{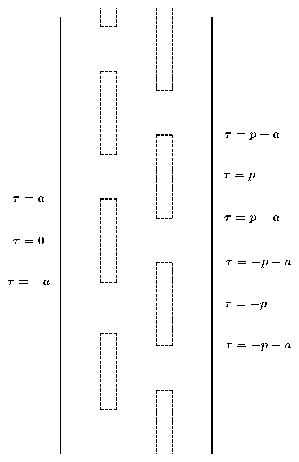}
\end{center}

According to Bing \cite[Ex.\,2,\,p.\,48]{bingconcerning} there exists a  hereditarily indecomposable continuum, $H$ say, which separates the plane. Let $\Omega$  be a bounded component of $\RR^2 \setminus H$ and $\pd \Omega$ its boundary. Then
 $\pd \Omega \subset H$,
 since points which are not in $H$ (which is closed)  are interior points of their connected component in $\RR^2 \setminus H$.

 Without loss of generality, suppose that in the $(\varsigma,\tau)$-plane
\begin{equation}\label{omega}\Om \subset [-\uf\pi,\uf\pi] \times [-a,a]~~ \text{ and }~~ \ol\Om \cap [-\uf\pi,\uf\pi] \times \{\pm a\} \neq \emptyset, \quad a>0.
\end{equation}
Denote by $S$ the strip $[-\pi,\pi]\times \RR$ and, with $a<p<2a$, consider  two parallel columns of copies of $\Omega$, arranged periodically with period $2p$ in the $\tau$ direction,
centred on the lines $\varsigma= \pm \pi/2$, and with height $2a$,  as illustrated  in the diagram. The copies of  $\Om$ in the right column  are translates through $(\pi, p)$ of those on the left.  (Apart from being open, connected and satisfying \eqref{omega}, nothing is known about the shape of $\Omega$, so the diagram is  for illustration only.) Let $\widehat \Omega$ denote the union of all the copies of $\Om$ in this arrangement.   The key to what follows is the property of $\widehat \Omega$ that, for all $\tau \in \RR$, the set $\{\varsigma: (\varsigma,\tau) \in \widehat \Omega\}$ has strictly positive measure.


Now by Whitney's result there exists $\psi: \RR^2 \to \RR$ which is infinitely differentiable,  $\psi>0$ on $\RR \setminus \pd \widehat \Omega$, and $\psi$ and all its derivatives are zero on $\pd \widehat \Omega$. There is no loss of generality in assuming that $\psi$ is $2p$-periodic in $\tau$ and equals 1 in the two strips  $[7\pi/8,\pi] \times \RR$ and  $[-\pi,-7\pi/8] \times \RR$.
Now, for $(\varsigma ,\tau) \in S$, let
\begin{equation}\begin{split}\psi^-(\varsigma,\tau) &= - \psi(\varsigma,\tau) \text{ when }(\varsigma,\tau) \in \widehat \Omega,\quad  \text { and } \psi^-(\varsigma,\tau)= 0 \text{ otherwise},\\
\psi^+(\varsigma,\tau) &=  \psi(\varsigma,\tau) \text{ when }(\varsigma,\tau) \in S \setminus \widehat \Omega,\quad  \text { and } \psi^+(\varsigma,\tau)= 0 \text{ otherwise}.
\end{split}\end{equation}
Next define infinitely differentiable functions  $\kappa^\pm$ which are $2p$-periodic in $\tau\in \RR$ by
$$
\kappa^\pm(\tau) = \int_{-\pi}^{\pi} \psi^\pm(\varsigma,\tau)\,d\varsigma,\quad \tau \in \RR,
$$
where $\kappa_-(\tau) <0<\kappa^+(\tau),~\tau \in \RR$, and let
\begin{gather}\label{Phi1} \vp(\varsigma,\tau) = \kappa^+(\tau)\psi^-(\varsigma,\tau) -   \kappa^-(\tau) \psi^+(\varsigma,\tau).\\
\intertext{Then $\vp(\varsigma,\tau) = -\kappa^-(\tau)>0$ when $|\varsigma-\pi|<\pi/8$, $\vp$ is infinitely differentiable,  $\pd \widehat \Omega$ is the zero set of $\vp$, and}
\int_{-\pi}^\pi \vp(\varsigma,\tau)\,d\varsigma = 0 \text{ for all } \tau \in \RR.
\label{Phi2}\end{gather}
If $\Phi:S \to \RR$  is defined by
\begin{gather}\label{dPhi}\Phi(\varsigma,\tau) =  \int_{-\pi}^\varsigma \vp(s,\tau)\,ds,\quad (\varsigma,\tau) \in S,
\\ \intertext{then for $\tau \in \RR$,}
\notag\Phi(-\pi,\tau) = \Phi(\pi,\tau) = 0,\quad  \frac{\partial\Phi}{\partial \varsigma}( \varsigma, \tau) = -\kappa^-(\tau),~~|\varsigma-\pi|<\pi/8,
\\ \text{ and }\qquad
\notag\frac{\partial^k\Phi}{\partial \varsigma^k}(-\pi,\tau) = \frac{\partial^k\Phi}{\partial \varsigma^k}( \pi, \tau) = 0 \forall k \geq 2.
\end{gather}
With this in mind, an infinitely differentiable function $r:\RR^2 \to \RR$ can be defined by
putting $ r(0,0) = 0$ and, for   $(x,y) =\rho(\cos \vt,\sin \vt)$ in polar coordinates, let
\begin{align}\label{rPhi}r(x,y) =\hat r(\rho,\vt) &:= \exp\left(\frac{-1}{\rho^2}\right)\Phi\left(\frac{1}{\rho},\vt\right),~~ \rho>0, ~~ \vt \in [-\pi,\pi]. \end{align}
Then since \eqref{EQN2} can be written
$$\nabla\left(\frac{1}{2} \l\|(x,y)\|^2 -r(x,y)\right) =0,$$ its non-trivial solutions satisfy
\begin{equation*}\label{equaviv}
\frac{\partial ~}{\partial \rho}\left(\half \l\,\rho^2
 -\hat r (\rho,\vt) \right)  =0, \quad\quad \frac{\partial ~}{\partial \vt}\left(\half \l\,\rho^2
 -\hat r (\rho,\vt) \right)  =0
 \quad \rho>0,~~ \vt \in [-\pi,\pi].
\end{equation*}
By \eqref{dPhi} and \eqref{rPhi},  the second equation implies that
\begin{equation}\label{Killer} \vp\left(\frac{1}{\rho},\vt\right) =0, \quad \rho >0, \text{ which means that } \left(\frac{1}{\rho},\vt\right) \in \widehat \Omega.\end{equation}
  Therefore, from \eqref{Killer} and the construction of $\widehat \Omega$, it follows that all non-trivial solutions of \eqref{EQN2} in this example lie the union of an infinite set
  of disjoint scaled copies of $\pd \Om$ (see \eqref{omega}) and, since $\pd\Omega \subset H$,  all the path-connected components of the non-trivial solution set are singletons.
The existence of non-trivial solutions of \eqref{EQN2} with $\big(\l,( x,y)\big)$ near to $\big((0,(0,0)\big))$ is guaranteed by Theorem \ref{gradbifthy}. \qed
\begin{appendix}

\section{Appendix}
\label{Ax1}
 \begin{prop} \label{prop1}In a metric space $(M,d)$ let    $\sG = \{G_\alpha: \alpha \in \sA\}$  be an open cover of a connected set $A$. Then for any $\e>0$ and   $x,y \in A$, there exists a finite set $\{G_{\a_1}, \cdots, G_{\a_n}\}\subset \sG$ with
\begin{equation}\label{chain}
x\in G_{\a_1},~~ y \in G_{\a_n} ~~  \text{and} ~~ G_{\a_i} \cap G_{\a_{j}} \neq \emptyset ~~\iff ~~|i-j|\leq 1.
\end{equation}
\end{prop}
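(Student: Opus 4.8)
The plan is to prove this by a standard "chaining through a connected set" argument, refining a naive chain into one where non-adjacent links are disjoint. First I would reduce to a finite subcover: since $A$ is covered by $\sG$ and (as will be used implicitly) one may work locally, I replace each $G_\a$ by $G_\a \cap A$ and note it suffices to produce the chain among these relatively open sets; then extract a finite subcover $\{G_1,\dots,G_N\}$ of the compact-like behaviour is not available, so instead I argue directly with connectedness. The cleaner route: define $x \sim y$ for $x,y\in A$ to mean that a finite chain as in \eqref{chain} exists (for the fixed cover $\sG$), ignoring for the moment the disjointness-of-non-adjacent-links condition, i.e.\ first only requiring $G_{\a_i}\cap G_{\a_{i+1}}\neq\emptyset$ and $x\in G_{\a_1}$, $y\in G_{\a_n}$. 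One checks this is an equivalence relation whose classes are open (if $z\in G_\a$ and $G_\a$ meets the chain to $x$, extend it) and cover $A$; since $A$ is connected there is a single class, so \emph{some} chain exists between any $x,y\in A$.

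The second and main step is to upgrade an arbitrary connecting chain $G_{\a_1},\dots,G_{\a_n}$ to one satisfying the sharp condition "$G_{\a_i}\cap G_{\a_j}\neq\emptyset \iff |i-j|\le 1$". Here I would take a chain of \emph{minimal length} $n$ among all chains from $x$ to $y$ in $\sG$. Minimality forces, for $|i-j|\ge 2$, that $G_{\a_i}\cap G_{\a_j}=\emptyset$: otherwise, if $G_{\a_i}$ met $G_{\a_j}$ with $j\ge i+2$, the subchain $G_{\a_1},\dots,G_{\a_i},G_{\a_j},\dots,G_{\a_n}$ would be a strictly shorter connecting chain, contradicting minimality. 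It also forces $G_{\a_i}\neq G_{\a_j}$ for $i\neq j$ and, with a tiny bit more care, that $x\notin G_{\a_i}$ for $i\ge 2$ (else drop the initial segment) and $y\notin G_{\a_i}$ for $i\le n-1$, which is what pins down $x\in G_{\a_1}$, $y\in G_{\a_n}$ as the only incidences at the ends. Consecutive links of course still intersect, so the "$\Leftarrow$" direction of \eqref{chain} holds, and minimality gives "$\Rightarrow$".

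The role of $\e$: the statement asks for the chain to consist of \emph{members of $\sG$}, and $\e$ plays no role unless $\sG$ is tacitly the cover by $\e$-balls (as it is at the point of use in the paper); in that reading one simply applies the above to the open cover $\{B_\e(a): a\in A\}$, and then optionally replaces each ball by one of radius $\e$ to record the diameter bound. I would phrase the proof for a general open cover $\sG$ and remark that the $\e$-ball cover is the intended application, so that Corollary \ref{piecewise} (joining the centres by segments) follows. The only genuine obstacle is making the minimal-length extraction rigorous — one must know that connecting chains exist (Step 1, via connectedness) and that among the nonempty set of their lengths there is a least element (well-ordering of $\NN$); after that, every clause of \eqref{chain} is a short contradiction argument against minimality, with no analysis or calculation involved.
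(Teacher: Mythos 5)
Your proof is correct, and it rests on the same underlying mechanism as the paper's --- the connectedness of $A$ exploited through a clopen argument --- but the two proofs are organised differently in a way worth noting. The paper defines $B$ to be the set of $y$ reachable from $x$ by a chain already satisfying the full condition \eqref{chain}; openness of $B$ is immediate, and closedness is proved by taking a chain to a nearby $y\in B$, truncating it at the \emph{first} link that meets a set $G$ containing the limit point, and appending $G$, so the pruning that earns the ``only if'' half of \eqref{chain} is redone at every extension step. You instead run the clopen (equivalence-class) argument only for weak chains, in which consecutive links are merely required to meet, and then secure the sharp intersection condition in a single stroke by passing to a chain of minimal length: any incidence $G_{\a_i}\cap G_{\a_j}\neq\emptyset$ with $j\ge i+2$ would let the intermediate links be excised, contradicting minimality. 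Both routes are valid; yours cleanly isolates where the $\iff$ is earned and sidesteps a small degenerate case the paper leaves implicit (if the appended set $G$ coincides with the truncation link $G_{\tilde\a_k}$, the extended list repeats a link and violates \eqref{chain}, so one must simply stop at $G_{\tilde\a_k}$). Your observation about $\e$ is also accurate: it plays no role in the statement or proof of the proposition itself and only becomes relevant when the cover is specialised to $\e$-balls in Corollary \ref{corex}.
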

\begin{proof}
Fix  $x \in A$ and let $B \subset A$ be the set of $y \in A$ such that \eqref{chain} holds for an ordered finite subset of $\sG$. It is immediate from  \eqref{chain} that  $z \in B$ if  $z \in G_{{\a_n}}\cap A$. So $B$ is open in $A$.
Now suppose $z$ is in the closure of $B$ in $A$.

Since $\sG$ covers $A$, there exists $G \in \sG$  such that $z \in G$, and there exists $y \in B$ with $y\in G$.   Since  $y\in B \cap G$, there exists $ \{G_{\tilde\a_1}, \cdots, G_{\tilde\a_m}\}\subset \sG$  be such that \eqref{chain} holds. Let $k$ be the smallest element of  $\{1,\cdots, m\}$ for which $ G_{\tilde \a_k}\cap G \neq \emptyset$. Then
$ \{ G_{\tilde\a_{n_j}},1 \leq j \leq k\}\cup \{G\}$
   satisfies \eqref{chain} with $z$ instead of $y$.
So $z \in B$, which proves that $B$ is closed, as well as open,  in $A$. Since $A$ is connected in $M$ and $B \neq \emptyset$, because $x\in B$, it follows that   $B=A$.
\end{proof}
\begin{corollary}\label{corex}
For given  $x,y \in A$, a connected set in   $(M,d)$,  and all $\e>0$ sufficiently small, there  is a discrete set of points   $\{x_1,\cdots,x_{n_\e}\}$ in  $ A$ with $x_1 = x,~x_{n_\e} = y$ and
\begin{equation}\label{balls}
 B_\e(x^\e_i) \cap B_\e(x^\e_j)\neq \emptyset \iff |i-j|\leq 1,   ~~  1 \leq i,j\leq n_\e,
\end{equation}  where $B_\e(a)$ is the open ball with radius $\e$ centred at  $a$ in $M$.
\end{corollary}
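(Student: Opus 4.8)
The plan is to deduce Corollary~\ref{corex} from Proposition~\ref{prop1} in two stages. First I would build, inside $A$, a finite ``$\e$-chain'' $x=y_0,y_1,\dots,y_N=y$ with $d(y_{t-1},y_t)<\e$ for $1\le t\le N$ --- this already has the prescribed endpoints --- and then thin it to a sub-chain $(x_i)$ whose $\e$-balls meet in exactly the pattern \eqref{balls}.

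For the first stage, apply Proposition~\ref{prop1} to the open cover $\sG=\{B_{\e/2}(a):a\in A\}$ of the connected set $A$. It produces centres $a_1,\dots,a_m\in A$ with $x\in B_{\e/2}(a_1)$, $y\in B_{\e/2}(a_m)$ and, in particular, each $B_{\e/2}(a_i)$ meeting $B_{\e/2}(a_{i+1})$; since two intersecting balls of radius $\e/2$ have centres at distance $<\e$, and $d(x,a_1),d(a_m,y)<\e/2$, the sequence $x,a_1,\dots,a_m,y$ --- relabel it $y_0,\dots,y_N$ with $N:=m+1$ --- is an $\e$-chain in $A$ from $x$ to $y$. (If $x=y$, take the one-term chain.)

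For the second stage, thin $(y_t)_{t=0}^N$ greedily: set $t_0:=0$, and while $t_k<N$ let
$$t_{k+1}:=\max\{\,t\le N:\ B_\e(y_t)\cap B_\e(y_{t_k})\neq\emptyset\,\}.$$
The maximum is attained (the set is finite and contains $t_k$), and $t_{k+1}\ge t_k+1$ since $d(y_{t_k+1},y_{t_k})<\e<2\e$; so $(t_k)$ strictly increases and reaches $N$ at some index $M$. Put $x_{k+1}:=y_{t_k}$ for $0\le k\le M$ and $n_\e:=M+1$, so $x_1=x$ and $x_{n_\e}=y$. Consecutive $\e$-balls meet by the very choice of $t_{k+1}$; and if $k'\ge k+2$ then $t_{k'}>t_{k+1}$, so $t_{k'}$ lies outside the maximising set and $B_\e(y_{t_{k'}})\cap B_\e(y_{t_k})=\emptyset$ --- in particular the points $x_i$ with indices differing by at least $2$ are distinct. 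After re-indexing $i\mapsto i-1$ and using symmetry, this is precisely \eqref{balls}, and $\{x_1,\dots,x_{n_\e}\}$, being finite, is discrete.

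The one genuine difficulty is this last transfer of the chaining pattern to $\e$-balls while keeping $x$ and $y$ as the first and last points. One cannot just prepend $x$ to the chain of centres that Proposition~\ref{prop1} supplies directly for the cover $\{B_\e(a):a\in A\}$, because $B_\e(x)$ can reach so much further than $B_\e(a_1)$ that it also meets $B_\e(a_3)$, destroying \eqref{balls}. Chaining at scale $\e/2$ and then tautening at scale $\e$ is exactly what sidesteps this, since the greedy thinning only ever uses that consecutive distances are $<\e<2\e$. Degenerate cases --- the chain revisiting a point, or $x=y$ --- cause no trouble once repeated consecutive terms are deleted, and one checks the argument in fact holds for every $\e>0$, not merely small ones.
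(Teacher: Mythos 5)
Your argument is correct, but it follows a genuinely different route from the paper's. The paper applies Proposition \ref{prop1} once, directly to the cover $\{B_\e(a):a\in A\}$, obtaining a chain of $\e$-balls with $x$ in the first link and $y$ in the last; it then resolves exactly the endpoint difficulty you identify (that $B_\e(x)$ may meet several links) by truncating the chain at $m^\e_x=\sup\{j:B_\e(x)\cap B_\e(a^\e_j)\neq\emptyset\}$ and $m^\e_y=\inf\{j:B_\e(y)\cap B_\e(a^\e_j)\neq\emptyset\}$ and then inserting $x$ and $y$ at the two ends; this is where the hypothesis that $\e$ be sufficiently small (so that $m^\e_y>m^\e_x$ and $B_\e(x)\cap B_\e(y)=\emptyset$) enters. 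You instead chain at scale $\e/2$ to produce a walk $y_0,\dots,y_N$ in $A$ with consecutive distances $<\e$, and then extract a subsequence by a greedy maximal-index selection, the disjointness of non-consecutive selected balls being precisely the maximality of $t_{k+1}$. Your route costs an extra thinning step but buys two things: the endpoints need no special treatment, and the conclusion holds for every $\e>0$ rather than only for $\e$ small. The one point worth making explicit is the degenerate case you mention only in passing: a consecutive repetition $y_{t_k}=y_{t_{k+1}}$ can occur only at the terminal step $t_{k+1}=N$ (otherwise $t_{k+2}$ would equal $t_{k+1}$, contradicting strict increase), and deleting the duplicate there does not disturb \eqref{balls}. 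With that observation your proof is complete.
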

\begin{proof}
By Proposition  \ref{prop1} with  $\sG = \{B_{\e}(a): a\in A\}$ there exists $a^\e_j\in A$, $1\leq j \leq m_\e$ such that
$$
x \in B_{\e}(a_1^\e),~~y \in B_{\e}(a_{m_\e}^\e), ~~ B_\e(a^\e_i) \cap B_\e(a^\e_j)\neq \emptyset \iff |i-j|\leq 1.
$$
 Now let
$$m^\e_x = \sup\{j: B_{\e}(x) \cap B_\e(a^\e_j)\neq \emptyset\}\text{  and } m^\e_y = \inf\{j: B_{\e}(y) \cap B_\e(a^\e_j)\neq \emptyset\},$$
and, with  $\e>0$ sufficiently small that $m^\e_y > m^\e_x$, let $n_\e = m^\e_y-m^\e_x+3$ and put
$$x^\e_1 = x, \quad x^\e_2= a^\e_{m_x},~~ x^\e_3 = a^\e_{m_x +1}, \cdots , ~x^\e_{n-1} = a^\e_{m_y},\quad x^\e_{n_\e} = y,
$$
to achieve the required result.
\end{proof}
\begin{corollary}\label{piecewise}  In the special case when $(M,d)$ is a normed linear space and  the    balls   $B_\e(x^\e_i)$ are as in Corollary \ref{corex},
  let  $L^\e_i$  be the straight line segments, $\{t x^\e_i+(1-t)x^\e_{i+1}: t \in [0,1]\}$, which join the  centres of consecutive balls.
Then
$$
L^\e_i \cap L^\e_{i+1}=\{e^\e_{i+1}\}~~ \text{ and }~~ L^\e_i\cap L^\e_j = \emptyset ~\text{ when }i+1<j\leq n_\e,~~  i\geq 1.
 $$
Consequently, $L^\e: = \cup_{i=1}^{n-1}L^\e_i$ is
a continuous, piecewise-linear,  non-self-intersecting curve joining $x$ to $y$.
\end{corollary}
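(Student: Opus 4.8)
The plan is to reduce the claim to the separation property \eqref{balls} already established in Corollary \ref{corex}, exploiting the fact that, because $\e$ may be taken as small as we wish, we can first pass to a subsequence of the $x^\e_i$ along which consecutive centres are genuinely close while non-consecutive centres are genuinely far apart. Concretely, note that \eqref{balls} says $\|x^\e_i-x^\e_j\|<2\e$ when $|i-j|=1$ and $\|x^\e_i-x^\e_j\|\geq 2\e$ when $|i-j|\geq 2$; so any point on $L^\e_i$ lies within distance $2\e$ of $x^\e_i$, i.e.\ $L^\e_i \subset \ol{B_{2\e}(x^\e_i)}$.

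First I would prove the disjointness of far-apart segments. If $i+1<j$ then, for any $u\in L^\e_i$ and $v\in L^\e_j$, both $u$ and $v$ are within $2\e$ of $x^\e_i$ and $x^\e_j$ respectively, while $i$ and $j$ differ by at least $2$ from each other \emph{and} $i$ differs by at least $2$ from $j$; the only subtlety is that $L^\e_i$ also touches $x^\e_{i+1}$, and $L^\e_j$ touches $x^\e_{j+1}$, so I actually compare the four indices $i,i+1,j,j+1$. Since $i+1<j$, all of $|i-j|,|i-(j+1)|,|(i+1)-j|,|(i+1)-(j+1)|$ are $\geq 2$ (the smallest, $|(i+1)-j|$, is $\geq 1$; to get $\geq 2$ one uses $i+1<j$ strictly, giving $j-(i+1)\geq 1$, and if $j-(i+1)=1$ one still has $|i-j|=2$). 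A short triangle-inequality estimate then forces $\|u-v\|>0$, so $L^\e_i\cap L^\e_j=\emptyset$. The cleanest route is: $\|u-v\|\geq \|x^\e_{i+1}-x^\e_j\|-\|u-x^\e_{i+1}\|-\|v-x^\e_j\|$; the first term is $\geq 2\e$ by \eqref{balls}, but the subtracted terms could be as large as $2\e$ each, so this crude bound is not quite enough. The fix is to shrink $\e$ in Corollary \ref{corex} so that the construction there additionally guarantees $\|x^\e_i-x^\e_{i+1}\|<\e/4$ (achievable since the balls $B_\e(a)$, $a\in A$, cover $A$ and one may choose the chain from among balls of radius $\e/8$ while still calling the radius $\e$); then every point of $L^\e_i$ is within $\e/4$ of $x^\e_i$, non-consecutive centres are $\geq 2\e$ apart, and the triangle inequality gives $\|u-v\|\geq 2\e - \e/4 - \e/4 >0$.

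Next I would treat consecutive segments $L^\e_i$ and $L^\e_{i+1}$, which share the endpoint $x^\e_{i+1}$ by construction; write $e^\e_{i+1}:=x^\e_{i+1}$. To see the intersection is exactly $\{e^\e_{i+1}\}$ and not larger (the segments are not collinear in a degenerate overlapping way), suppose $w\in L^\e_i\cap L^\e_{i+1}$ with $w\neq x^\e_{i+1}$. Then $w$ lies on the open segment from $x^\e_i$ to $x^\e_{i+1}$ and also on the one from $x^\e_{i+1}$ to $x^\e_{i+2}$, which would force $x^\e_i$, $x^\e_{i+1}$, $x^\e_{i+2}$ to be collinear with $x^\e_{i+1}$ not between the other two; but then $\|x^\e_i-x^\e_{i+2}\| < \max(\|x^\e_i-x^\e_{i+1}\|,\|x^\e_{i+1}-x^\e_{i+2}\|) < \e/4 < 2\e$, contradicting \eqref{balls} for indices differing by $2$. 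Hence $L^\e_i\cap L^\e_{i+1}=\{e^\e_{i+1}\}$. Finally, since $L^\e$ is a finite union of segments, consecutive ones meeting only at a common endpoint and non-consecutive ones disjoint, $L^\e$ is the image of a single injective continuous map from an interval (concatenate the parametrisations), hence a non-self-intersecting piecewise-linear curve from $x=x^\e_1$ to $y=x^\e_{n_\e}$.

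I expect the only real obstacle to be the quantitative bookkeeping in the first step: the raw separation constant $2\e$ from \eqref{balls} is not comfortably larger than twice the segment diameter unless one tightens the chain construction, so the key move is to observe that Corollary \ref{corex} has slack — one is free to build the $\e$-chain out of much smaller balls — and to record that refinement explicitly before running the triangle-inequality estimates. Everything after that is routine.
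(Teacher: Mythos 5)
There is a genuine gap in the main step (disjointness of non-consecutive segments), and it lies exactly where you located the difficulty. Your proposed fix --- rebuild the chain from balls of radius $\e/8$ so that consecutive centres satisfy $\|x^\e_i-x^\e_{i+1}\|<\e/4$ while non-consecutive centres remain $\geq 2\e$ apart --- asks for something that cannot exist. The separation in \eqref{balls} scales with the radius: a chain of radius-$\e/8$ balls gives non-consecutive centres at least $\e/4$ apart, not $2\e$. Worse, the combination you want is self-contradictory for any chain of length at least three: if $\|x^\e_i-x^\e_{i+1}\|<\e/4$ and $\|x^\e_{i+1}-x^\e_{i+2}\|<\e/4$, then $\|x^\e_i-x^\e_{i+2}\|<\e/2$, while $(i,i+2)$ is a non-consecutive pair that your hypothesis requires to be $\geq 2\e$ apart. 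Shrinking the balls leaves the ratio between consecutive-closeness and non-consecutive-separation unchanged, so the crude estimate $\|u-v\|\geq\|x^\e_{i+1}-x^\e_j\|-\|u-x^\e_{i+1}\|-\|v-x^\e_j\|$ stays inconclusive no matter how small the radius. The missing idea is the paper's two-sided argument: write a common point $z\in L^\e_i\cap L^\e_j$ as $z=sx^\e_i+(1-s)x^\e_{i+1}=tx^\e_j+(1-t)x^\e_{j+1}$, then apply the triangle inequality once to $\|x^\e_{i+1}-x^\e_{j+1}\|\geq 2\e$ (using the segment lengths $<2\e$) to force $s+t>1$, and once to $\|x^\e_i-x^\e_j\|\geq 2\e$ (with $s'=1-s$, $t'=1-t$) to force $s+t<1$; the contradiction needs no strengthening of Corollary \ref{corex} at all.

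Your treatment of consecutive segments is essentially sound and close to the paper's: a common point $w\neq x^\e_{i+1}$ forces $x^\e_i-x^\e_{i+1}$ and $x^\e_{i+2}-x^\e_{i+1}$ to be positive multiples of one another, whence $\|x^\e_i-x^\e_{i+2}\|<\max\left(\|x^\e_i-x^\e_{i+1}\|,\|x^\e_{i+1}-x^\e_{i+2}\|\right)<2\e$, contradicting \eqref{balls}. (Your intermediate bound $\e/4$ should just be $2\e$, which still suffices; no refinement of the chain is needed here either.) The concluding concatenation argument is fine once both intersection claims are in place.
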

\begin{proof} First suppose that  $z\in L^\e_i \cap L^\e_{i+1}$ and   $z\neq e^\e_{i+1}$.
Then
$$
z=(1-s)x^\e_{i+1}+sx^\e_{i+2} = tx^\e_i +(1-t)x^\e_{i+1},\quad s,~t \in (0,1],
$$
whence
$t(x^\e_i - x^\e_{i+1}) = s(x^\e_{i+2} - x^\e_{i+1})$, and so $s \neq t$ because $x_i^\e \neq x_{i+2}^\e$. If $s<t$,
$$
2\e \leq \|x_i^\e-x^\e_{i+2}\|  = \big(1-(s/t)\big)\|x_{i+2}^\e-x^\e_{i+1}\| <2\e,
$$
a contradiction, and if $t<s$,
$$
2\e \leq \|x_i^\e-x^\e_{i+2}\|  = \big(1-(t/s)\big)\|x_{i}^\e-x^\e_{i+1}\| <2\e,
$$
which is also false. This proves that $L^\e_i \cap L^\e_{i+1}=\{e^\e_{i+1}\}$ for all $i$.

Suppose  $ z \in L^\e_i\cap L^\e_j$ for  $i\geq 1$ and $i+1<j\leq n_\e-1$. Then, by \eqref{balls},
\begin{align*}
\|x^\e_i&-x^\e_{i+1}\| <2\e,~~ \|x^\e_j-x^\e_{j+1}\| <2\e, ~~ \|x^\e_i-x^\e_j\|\geq 2\e, ~~\|x^\e_{i+1}-x^\e_{j+1}\|\geq 2\e, \\\intertext{and}
z&=sx^\e_i+(1-s)x^\e_{i+1} = tx^\e_j +(1-t)x^\e_{j+1},\quad s,~t \in [0,1],\\
&=(1-s')x^\e_i+ s'x^\e_{i+1} = (1-t')x^\e_j +t'x^\e_{j+1},~~ s'=1-s,~t'=1-t.
\end{align*}
Therefore
$ x^\e_{i+1} +s(x^\e_i-x^\e_{i+1}) = x^\e_{j+1} +t(x^\e_j-x^\e_{j+1})$, which implies
$$2\e \leq \|x^\e_{i+1}-x^\e_{j+1}\|\leq s\|x^\e_i-x^\e_{i+1}\| +t\|x^\e_j-x^\e_{j+1}\| < 2\e(s+t),
$$
and hence $s+t > 1$.
Also $x^\e_i-x^\e_j = s'(x^\e_i-x^\e_{i+1}) + t'(x^\e_{j+1}-x^\e_j)$ and hence
$$
2\e\leq \|x^\e_i-x^\e_j\| \leq s'\|x^\e_{i}-x^\e_{i+1}\| + t'\|x^\e_{j+1}-x^\e_{j}\|  <2\e(s'+t'),
$$
from which it follows that $s'+t'>1$, equivalently, $s+t<1$, which is a contradiction.
Since the distinct line segments $L^\e_i$ joining centres of balls do not intersect, their union $L^\e$  is a continuous, piecewise-linear,  non-self-intersecting curve joining $x_1$ to $x_n$. \end{proof}
\begin{definition}\label{linearchain}
A linear chain $\sG$  is an ordered, finite collection of open  sets  with $G_i \cap G_j \neq \emptyset$ if and only if $|i-j|\leq 1$.
The $G_i$, which may not be connected, are   the links of $\sG$ and an $\e$-linear chain is a linear chain with  links of diameter less that $\e$.

If, for all $\e>0$,  a set $A$ can be covered by an $\e$-linear chain,  $A$ is said to be chained.
A chained hereditarily indecomposable continuum is called a pseudo-arc. \qed\end{definition}
\begin{remark}\label{mostcontinua}
According to \cite[Thm. 10]{bingembedding}, in the plane most bounded continua  (in the sense of the Baire Category Theorem in the complete metric space of continua with the Hausdorff metric) are pseudo-arcs  for which
the links of the covering $\e$-linear chains are open disks of diameter less than $\e$. This  statement is  stronger than \eqref{balls}  because  there the chain and its length $n$ depend on $\e$,  $x$ and $y$. When $A$
is compact $n$ is bounded depending only on $\e$ (see \cite{beer}), but the links of the chain
 still depend on $x,\,y$.  \qed
\end{remark}

 \end{appendix}

{J. F. Toland\\{\noindent Department of Mathematical Sciences,\\ University of Bath,\\ Bath,\\
 BA2 7AY UK\\ {\tt{email:~~masjft@bath.ac.uk}}}

\ed